\numberwithin{equation}{section}
\numberwithin{equation}{section}
\theoremstyle{plain}
 \theoremstyle{definition}
\newtheorem{?}[Th]{Problem}
\DeclareMathOperator{\sign}{sgn}
\newenvironment{theorem}[2][Theorem]{\begin{trivlist}
\item[\hskip \labelsep {\bfseries #1}\hskip \labelsep {\bfseries #2.}]}{\end{trivlist}}
\newenvironment{lemma}[2][Lemma]{\begin{trivlist}
\item[\hskip \labelsep {\bfseries #1}\hskip \labelsep {\bfseries #2.}]}{\end{trivlist}}
\newcounter{qrr@oldeq}
\newcounter{qrr@oldsubeq}
\newcounter{qrr@realeq}
\renewenvironment{subequations}{%
  \refstepcounter{equation}%
  \protected@edef\theparentequation{\theequation}%
  \setcounter{parentequation}{\value{equation}}%
  \setcounter{equation}{0}%
  \def\theequation{\theparentequation\alph{equation}}%
  \ignorespaces
}{%
  \setcounter{qrr@oldeq}{\value{parentequation}}%
  \setcounter{qrr@oldsubeq}{\value{equation}}%
  \setcounter{equation}{\value{parentequation}}%
  \ignorespacesafterend
}
\newenvironment{subequations*}{%
  \setcounter{qrr@realeq}{\value{equation}}%
  \let\theparentequation\theequation%
  \patchcmd{\theparentequation}{equation}{parentequation}{}{}%
  \setcounter{parentequation}{\numexpr\value{qrr@oldeq}-1}%
  \setcounter{equation}{\value{qrr@oldsubeq}}%
  \def\theequation{\theparentequation\alph{equation}}%
  \refstepcounter{parentequation}%
  \ignorespaces
}{%
  \setcounter{qrr@oldeq}{\value{parentequation}}%
  \setcounter{qrr@oldsubeq}{\value{equation}}%
  \setcounter{equation}{\value{qrr@realeq}}%
  \ignorespacesafterend
}
\begin{document}

\title{ High-Frequency Instabilities of a Boussinesq-Whitham System: A Perturbative Approach}

\author{Ryan Creedon$^{1}$, Bernard Deconinck$^2$, Olga Trichtchenko$^3$ \\~\\ \fontsize{0.15}{0.15}\selectfont $^1$ Dept. of Applied Mathematics, U. of Washington,  Seattle, WA, 98105, USA (\emph{creedon@uw.edu}) \\ $^2$ Dept. of Applied Mathematics, U. of Washington,  Seattle, WA, 98105, USA (\emph{deconinc@uw.edu}) \\$^3$ Dept. of Physics and Astronomy, U. of Western Ontario, London, ON, N6A 3K7, CA (\emph{otrichtc@uwo.ca}) \footnotesize \\ ~ \\ February 9, 2021} 

%\address{University of Washinton\\ Department of Applied Mathematics \\
%Seattle, WA 98195-3925} 

%\email{peter.csikvari@gmail.com}

% \subjclass[2010]{Primary: 05C??. Secondary: 05C??}

% \keywords{sample paper} 

\begin{abstract} We analyze the spectral stability of small-amplitude, periodic, traveling-wave solutions of a Boussinesq-Whitham system.  These solutions are shown numerically to exhibit high-frequency instabilities when subject to bounded perturbations on the real line. We use a formal perturbation method to estimate the asymptotic behavior of these instabilities in the small-amplitude regime. We compare these asymptotic results with direct numerical computations.
\end{abstract}

\maketitle
\pagestyle{myheadings}
\markright{HIGH-FREQUENCY INSTABILITIES OF A BOUSSINESQ-WHITHAM SYSTEM: A PERTURBATIVE APPROACH \hfill \hfill}

\section{Introduction}

We investigate small-amplitude, $2\pi/\kappa$-periodic, traveling waves of a Boussinesq-Whitham system proposed by Hur and Pandey \cite{hurpandey19} and Hur and Tao \cite{hurtao19}:

\begin{equation} \label{1}
\begin{aligned}
\eta_t &= -h_0u_x - (\eta u)_x, \\
u_t &= -g\mathcal{K}[\eta_x] - uu_x.
\end{aligned}
\end{equation}

\noindent In this model, $\eta(x,t)$ represents the displacement of a wave profile from its equilibrium depth $h_0$, $u(x,t)$ is the horizontal velocity along $\eta$, and $\mathcal{K}$ is a Fourier multiplier operator defined so that the linearized dispersion relation of \eqref{1} matches that of the Euler water wave problem (WWP) \cite{whitham67}. For functions $f \in \textrm{L}_{\textrm{per}}^1(-\pi/\kappa,\pi/\kappa)$, $\mathcal{K}$ is defined as

\begin{equation}
\widehat{\mathcal{K}[f]}(\kappa n) = \frac{\tanh(\kappa n h_0)}{\kappa n h_0}\widehat{f}(\kappa n), \quad n \in \mathbb{Z},
\end{equation}

\noindent where $\widehat{\cdot}$ denotes the Fourier transform of $f$:

\begin{equation} \label{1b}
\widehat{f}(k) = \frac{\kappa}{2\pi} \int_{-\pi/\kappa}^{\pi/\kappa} f(x)e^{-ikx} d x.
\end{equation}

\noindent Alternatively, $\mathcal{K}$ can be defined in physical variables as the pseudo-differential operator

\begin{equation}
\mathcal{K}[f] = \left(\frac{\tanh(h_0D)}{h_0D}\right)f,
\end{equation}

\noindent where $D = -i\partial_x$. For the remainder of this manuscript, we refer to \eqref{1} as the Hur-Pandey-Tao--Boussinesq-Whitham system, or HPT--BW for short. \\
\indent The HPT--BW system is Hamiltonian \cite{hurpandey19} with

\begin{align} \label{2}
\mathcal{H} = \frac12 \int_{-\pi/\kappa}^{\pi/\kappa} \left(  h_0u^2 + g\eta \mathcal{K}[\eta] + \eta u^2 \right) d x.
\end{align}

\noindent and non-canonical Poisson structure

\begin{align}
J = -\begin{pmatrix} 0 & \partial_x \\ \partial_x & 0 \end{pmatrix}.
\end{align}

\noindent The system has a one-parameter family of small-amplitude, $2\pi/\kappa$-periodic, traveling-wave solutions for each $\kappa>0$.
%In an appropriate traveling frame, these solutions are critical points of the Hamiltonian \eqref{2} \cite{hurpandey19}.
We call these solutions the Stokes waves of HPT--BW by analogy with solutions of the WWP of the same name \cite{nekrasov21, stokes1847, struik26}. In Section 2, we derive a power series expansion for HPT--BW Stokes waves in a small parameter $\varepsilon$ that scales with the amplitude of the waves.

Perturbing Stokes waves yields a spectral problem after linearizing the governing equations of the perturbations. The spectral elements of this problem define the stability spectrum of Stokes waves; see Section 3. The stability spectrum is purely continuous \cite{chicone06,kapitulapromislow13,reedsimon78}, but Floquet theory decomposes the spectrum into an uncountable union of point spectra. Each of these point spectra is indexed by the Floquet exponent \cite{deconinck06,haraguskapitula08,johnson10}.

The stability spectrum inherits quadrafold symmetry from the Hamiltonian structure of \eqref{1}, i.e., the spectrum is invariant under conjugation and negation \cite{haraguskapitula08,kapitulapromislow13}. Because of quadrafold symmetry, all elements of the stability spectrum have non-positive real component only if the stability spectrum is a subset of the imaginary axis. Therefore, HPT--BW Stokes waves are  spectrally stable only if the spectrum is on the imaginary axis.  Otherwise, the Stokes waves are spectrally unstable. \\
\indent If the aspect ratio $\kappa h_0$ is sufficiently large, both HPT--BW \cite{hurpandey19} and WWP Stokes waves \cite{benjamin67,benjaminfeir67,bridgesmielke95} have stability spectra near the origin that leave the imaginary axis for $0<|\varepsilon|\ll 1$, resulting in modulational instability. Using the Floquet-Fourier-Hill (FFH) method \cite{deconinck06}, recent numerical work by \cite{claassenjohnson18} and \cite{deconinck11} shows, respectively, that HPT--BW and WWP Stokes waves also have stability spectra away from the origin that leave the imaginary axis, regardless of $\kappa h_0$. These spectra give rise to the so-called high-frequency instabilities \cite{deconinck17}, shown schematically in Figure~\ref{fig1}.

\begin{figure}[tb]
\centering \includegraphics[width=7.5cm,height=6cm]{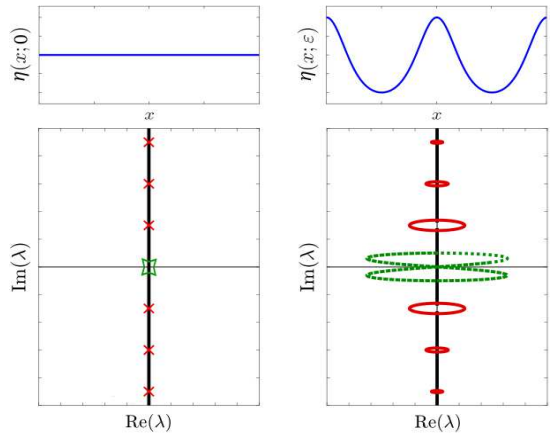}
\caption{(Top, left) A zero-amplitude ($\varepsilon = 0$) Stokes wave (solid blue). (Bottom, left) The stability spectrum of the zero-amplitude Stokes wave (solid black). Collisions of nonzero spectral elements are denoted by red crosses, while the collision of zero spectral elements is denoted by the green polygon. (Top, right) A small amplitude ($|\varepsilon| \ll 1$) Stokes wave (solid blue). (Bottom, right) The stability spectrum of the small-amplitude Stokes wave. Stable elements are in solid black. High-frequency isolas are in solid red. The modulational instability figure-eight pattern is in dashed green and is present only if $\kappa h_0$ is sufficiently large. \label{fig1} }
\end{figure}

High-frequency instabilities arise from the collision of nonzero stability eigenvalues of zero-amplitude ($\varepsilon = 0$) Stokes waves. At these collided spectral elements, a Hamiltonian-Hopf bifurcation occurs, resulting in a locus of spectral elements bounded away from the origin that leave the imaginary axis as $|\varepsilon|$ increases. We refer to this locus of spectral elements as a high-frequency isola.

High-frequency isolas are difficult to find using numerical methods like FFH. To capture the isola closest to the origin, for example, the interval of Floquet exponents that parameterizes the isola has width $\mathcal{O}\left(\varepsilon^2\right)$ (Figure~\ref{fig1b}). For isolas further from the origin, this width appears to decay geometrically in $\varepsilon$. To compound these difficulties, the isolas drift away from their initial collision sites (Figure~\ref{fig1b}), meaning that the Floquet exponent that gives rise to the collided spectral elements at $\varepsilon = 0$ is not contained in the interval parameterizing the corresponding isola for $|\varepsilon|$ sufficiently large. To circumvent these difficulties, one must supply the numerical method with asymptotic expressions for the interval of Floquet exponents corresponding to the desired isolas. We discover these expressions for high-frequency isolas of HPT--BW.

\begin{figure}[tb]
\centering \includegraphics[width=13cm,height=6cm]{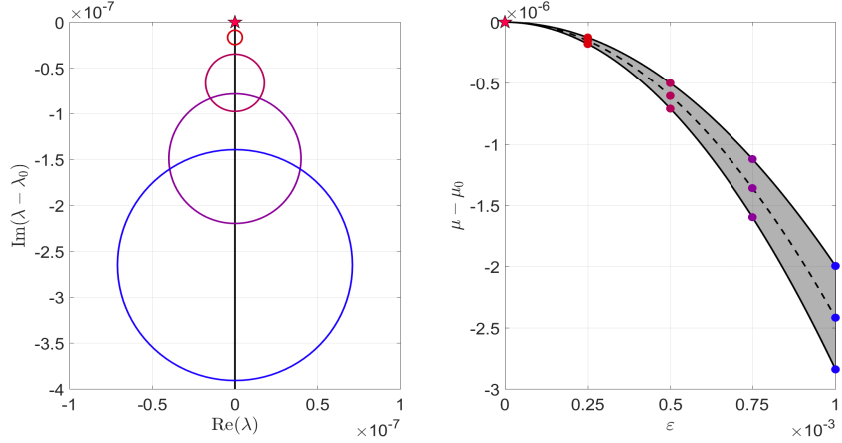}
\caption{(Left) The high-frequency isola closest to the origin of HPT--BW Stokes waves with $\kappa = g = h_0 = 1$ and $\varepsilon = 2.5 \times 10^{-4}$ (red), $5 \times 10^{-4}$ (burgundy), $7.5 \times 10^{-4}$ (purple), and $10^{-3}$  (blue). The collision that generates the isola when $\varepsilon =0$ is indicated by the red star. The imaginary axis is recentered to show the magnitude of the drift of the isola from the collision site $\lambda_0$. (Right) Interval of Floquet exponents that parameterize the isola on the left as a function of $\varepsilon$. The solid black lines indicate the boundaries of the interval, while the dashed black line gives the Floquet exponent of the most unstable spectral element of the isola. The colored dots provide the Floquet data for the correspondingly colored isola in the left plot. The red star indicates the Floquet exponent $\mu_0$ that generates the isola when $\varepsilon = 0$. The Floquet axis is recentered to show the magnitude of the drift of the Floquet exponents from $\mu_0$. \label{fig1b} }
\end{figure}

Our motivation for studying the HPT--BW system, apart from its inherent interest, is that it retains the full dispersion relation (both branches) of the more complicated WWP. Our goal is the application of the perturbation method developed herein to the WWP. The first step towards this goal was the investigation of the stability spectra of Stokes waves of the Kawahara equation \cite{creedonetal21a}. The investigations presented here constitute our second step, before proceeding to the finite-depth WWP next \cite{creedonetal21b}.

%In contrast with other Boussinesq-Whitham systems \cite{carter18,claassenjohnson18}, HPT--BW has desirable well-posedness results for Cauchy %problems on the whole real line: most important for our work is the independence of local well-posedness on the initial condition's average %value \cite{dinvey19}. For these reasons, HPT--BW is an ideal candidate to obtain asymptotic information about high-frequency instabilities in %the limit as $\varepsilon \rightarrow 0$.
For a given high-frequency isola of an HPT--BW Stokes wave, we obtain (i) an asymptotic range of Floquet exponents that parameterize the isola, (ii) an asymptotic estimate for the most unstable spectral element of the isola, (iii) expressions of curves that are asymptotic to the isola, (iv) wavenumbers for which the given isola is not present. Our approach is inspired by a perturbation method outlined in \cite{akers15}, but modified appropriately for higher-order calculations. We compare all asymptotic results with numerical results computed by the FFH method.

%%%%%%%%%%%%%%%%%%%%%%%%%%%%%%%%%%%%%%%%%%
\section{Small-Amplitude Stokes Waves}
In a traveling frame moving with velocity $c$, $x \rightarrow x - ct$ and \eqref{1} becomes

\begin{equation} \label{3}
\begin{aligned}
\eta_t &= c\eta_x - h_0 u_x - (\eta u)_x, \\
u_t &= cu_x - g \mathcal{K}(\eta_x) - uu_x.
\end{aligned}
\end{equation}

\noindent Non-dimensionalizing \eqref{3} according to $\eta \rightarrow h_0 \eta$, $u \rightarrow u \sqrt{gh_0}$, $x \rightarrow \alpha^{-1} h_0 x$, $t \rightarrow t \sqrt{h_0/g}$, and $c \rightarrow c\sqrt{gh_0}$ yields the following system:

\begin{equation} \label{4}
\begin{aligned}
\alpha^{-1} \eta_t &= c\eta_x - u_x - (\eta u)_x, \\
\alpha^{-1} u_t &= cu_x - \mathcal{K}_\alpha(\eta_x) - uu_x.
\end{aligned}
\end{equation}

\noindent The parameter $\alpha$ is chosen to map  $2\pi/\kappa$-periodic solutions of \eqref{3} to $2\pi$-periodic solutions of \eqref{4}. Consequently, $\alpha = \kappa h_0 > 0$, the aspect ratio of the solutions, and

\begin{align}
\widehat{\mathcal{K}_\alpha[f]}(n) = \frac{\tanh(\alpha n)}{\alpha n}\widehat{f}(n), \quad n \in \mathbb{Z},
\end{align}

\noindent or, alternatively,

\begin{align}
\mathcal{K}_\alpha[f] = \left(\frac{\tanh(\alpha D)}{\alpha D}\right)f,
\end{align}

\noindent for $f \in \textrm{L}^1_{\textrm{per}}(-\pi,\pi)$ with the Fourier transform \eqref{1b} redefined over $(-\pi,\pi)$. \\
\indent Stokes wave solutions of \eqref{4} are independent of time. Equating time derivatives in \eqref{4} to zero and integrating in $x$, we find

\begin{equation} \label{5}
\begin{aligned}
c\eta &= u + \eta u + \mathcal{I}_1 \\
cu &= \mathcal{K}_\alpha(\eta) + \frac12 u^2 + \mathcal{I}_2,
\end{aligned}
\end{equation}

\noindent where $\mathcal{I}_j$ are integration constants. For each $\alpha > 0$, there exists a three-parameter family of infinitely differentiable, even, small-amplitude,  $2\pi$-periodic solutions of \eqref{5}, provided $\mathcal{I}_j$ are sufficiently small \cite{hurpandey19}. We call these solutions the HPT--BW Stokes waves, denoted $(\eta_S(x;\varepsilon,\mathcal{I}_j),u_S(x;\varepsilon,\mathcal{I}_j))^T$, where $\varepsilon$ is a small-amplitude parameter defined implicitly in terms of the first Fourier mode of $\eta_S(x;\varepsilon,\mathcal{I}_j)$:

\begin{align} \label{5b}
\varepsilon := 2\widehat{\eta_S}(1) = \frac{1}{\pi} \int_{-\pi}^{\pi} \eta_S(x;\varepsilon,\mathcal{I}_j) e^{-ix} dx.
\end{align}

\noindent \textbf{Remark}. Redefining $c \rightarrow c - \mathcal{I}_1$ and $u \rightarrow u - \mathcal{I}_1$ in \eqref{5} implies $\mathcal{I}_1 = 0$ without loss of generality. If we also equate $\mathcal{I}_2 = 0$, our Stokes waves reduce to a one-parameter family of solutions to \eqref{5} such that \eqref{5b} ensures that $\eta_S(x;\varepsilon) \sim \varepsilon \cos(x)$ as $\varepsilon \rightarrow 0$. We restrict to this case for simplicitly, but the methodology in Sections 4 and 5 are unchanged if $\mathcal{I}_2 \neq 0$. For series representations of Stokes waves that include $\mathcal{I}_j$, see \cite{hurpandey19}. \\\\
\indent The Stokes waves and their velocity may be expanded as power series in $\varepsilon$:

\begin{subequations} \label{6}
\begin{align}
\eta_S &= \eta_S(x;\varepsilon) = \varepsilon \cos(x) + \sum_{j=2}^{\infty} \eta_j(x) \varepsilon^j,\\
u_S &= u_S(x;\varepsilon) = c_0 \varepsilon \cos(x) + \sum_{j=2}^{\infty} u_j(x) \varepsilon^j, \\
c &= c(\varepsilon) = c_0 + \sum_{j=1}^{\infty} c_{2j}\varepsilon^{2j}, \quad c_0^2 = \frac{\tanh(\alpha)}{\alpha},
\end{align}
\end{subequations}

\noindent where $\eta_j(x)$ and $u_j(x)$ are analytic, even, and $2\pi$-periodic for each $j$. Substituting these expansions into \eqref{5} (with $\mathcal{I}_j = 0$) and following a Poincar\'{e}-Lindstedt perturbation method \cite{whitham67}, one determines $\eta_j(x)$, $u_j(x)$, and $c_{2j}$ order by order. In Appendix A, we report expansions of $\eta_S$, $u_S$, and $c$ up to fourth order in $\varepsilon$; this is sufficient for our asymptotic calculations of high-frequency isolas discussed in Sections 4 and 5.

\section{The Stability Spectrum of Stokes Waves}

Consider perturbations to $(\eta_S,u_S)^T$ of the form

\begin{equation} \label{7}
\begin{pmatrix} \eta(x,t;\varepsilon) \\ u(x,t;\varepsilon) \end{pmatrix} = \begin{pmatrix} \eta_S \\ u_S\end{pmatrix} + \rho \begin{pmatrix} H(x,t;\varepsilon) \\ U(x,t;\varepsilon)\end{pmatrix} + \mathcal{O}\left(\rho^2 \right),
\end{equation}

\noindent where $|\rho| \ll 1$ is a parameter independent of $\varepsilon$ and $H$ and $U$ are sufficiently smooth, bounded functions of $x$ on $\mathbb{R}$ for all $t \geq 0$. When \eqref{7} is substituted into \eqref{4}, terms of $\mathcal{O}\left( \rho^0 \right)$ cancel by \eqref{5} (with $\mathcal{I}_j = 0$). Equating terms of $\mathcal{O}(\rho)$, the perturbation $(H,U)^T$ solves the linear system

\begin{equation}
\frac{\partial}{\partial t} \begin{pmatrix} H \\ U \end{pmatrix} = \alpha \begin{pmatrix}-u_{S}' + (c-u_S)\partial_x & - \eta_{S}' - (1+\eta_S)\partial_x \\ - \frac{i\tanh(\alpha D)}{\alpha} & - u_{S}' + (c-u_S)\partial_x \end{pmatrix} \begin{pmatrix}H \\ U \end{pmatrix},
\end{equation}

\noindent where primes denote differentiation with respect to $x$. Formally separating variables,

\begin{align} \label{7b}
\begin{pmatrix} H(x,t;\varepsilon) \\ U(x,t;\varepsilon)\end{pmatrix} = e^{\lambda t} \begin{pmatrix} \mathcal{H}(x,t;\varepsilon) \\ \mathcal{U}(x,t;\varepsilon) \end{pmatrix},
\end{align}

\noindent where $(\mathcal{H},\mathcal{U})^T$ solves the spectral problem

\begin{align} \label{8}
\lambda \begin{pmatrix} \mathcal{H} \\ \mathcal{U} \end{pmatrix} = \alpha \begin{pmatrix}-u_{S}' + (c-u_S)\partial_x & - \eta_{S}' - (1+\eta_S)\partial_x \\ - \frac{i\tanh(\alpha D)}{\alpha} & - u_{S}' + (c-u_S)\partial_x \end{pmatrix} \begin{pmatrix}\mathcal{H} \\ \mathcal{U} \end{pmatrix}.
\end{align}

\noindent Since the entries of the matrix operator above are $2\pi$-periodic, one can use Floquet theory\footnote{Strictly speaking, Floquet theory applies only to linear, local operators. Work by \cite{bronskietal16} extends this theory to nonlocal operators.} to solve \eqref{8} for $(\mathcal{H},\mathcal{U})^T$. These solutions take the form

\begin{align} \label{9}
\begin{pmatrix} \mathcal{H}(x;\varepsilon) \\ \mathcal{U}(x;\varepsilon) \end{pmatrix} = e^{i\mu x} \begin{pmatrix} \mathfrak{h}(x;\varepsilon) \\ \mathfrak{u}(x;\varepsilon)\end{pmatrix},
\end{align}

\noindent where $\mu \in [-1/2,1/2]$ is called the Floquet exponent and $\mathfrak{h}, \mathfrak{u} \in \textrm{H}^1_{\textrm{per}}(-\pi,\pi)$. Substituting \eqref{9} into \eqref{8} results in a spectral problem for $\bf{w} =  (\mathfrak{h},\mathfrak{u})^T$:

\begin{align} \label{10}
 \lambda_{\varepsilon,\mu} {\bf{w}} = {\mathcal{L}}_{\varepsilon,\mu}\bf{w},
\end{align}

\noindent with

\begin{align}
\mathcal{L}_{\varepsilon,\mu} =  \alpha \begin{pmatrix}-u_{S}' + (c-u_S)(i\mu+\partial_x) & - \eta_{S}' - (1+\eta_S)(i\mu+\partial_x) \\ - \frac{i\tanh(\alpha (\mu+D))}{\alpha} & - u_{S}' + (c-u_S)\partial_x \end{pmatrix}.
\end{align}

\noindent For sufficiently small $\varepsilon$, \eqref{10} has a countable collection of eigenvalues $\lambda_{\varepsilon,\mu}$ for each Floquet exponent $\mu$ \cite{johnson10}. The union of these eigenvalues over $\mu \in [-1/2,1/2]$ recovers the purely continuous spectrum of \eqref{8} for fixed $\varepsilon$; this is the stability spectrum of HPT--BW Stokes waves. We use the Floquet-Fourier-Hill method \cite{deconinck06} to compute the stability spectrum numerically.

If there exists a $\mu$ such that there is a $\lambda_{\varepsilon,\mu}$ with $\textrm{Re}\left(\lambda_{\varepsilon,\mu}\right) > 0$, then there exists a perturbation \eqref{7b} that grows exponentially in time, and Stokes waves of amplitude $\varepsilon$ are spectrally unstable. If no such $\mu$ is found, then the Stokes waves are spectrally stable. Because of the quadrafold symmetry mentioned in the introduction, Stokes waves are spectrally stable if and only if their stability spectrum is a subset of the imaginary axis. \\
\indent When $\varepsilon = 0$, $\mathcal{L}_{0,\mu}$ has constant coefficients, and its spectral elements are given exactly by

\begin{align}
\lambda^{(\sigma)}_{0,\mu,n} = -i\Omega_{\sigma}(n+\mu), \quad n \in \mathbb{Z}, \quad \sigma = \pm 1,
\end{align}

\noindent where $\Omega_{\sigma}$ are the two branches of the linear dispersion relation of \eqref{4} with $c \rightarrow c_0$ ($c_0$ is given in \eqref{6}). Explicitly,

\begin{align} \label{10b}
\Omega_{\sigma}(k) = -\alpha c_0 k + \sigma \omega_\alpha(k),
\end{align}

\noindent where

\begin{align}
\omega_{\alpha}(k) = \textrm{sgn}(k)\sqrt{\alpha k \tanh(\alpha k)}.
\end{align}

\noindent As expected, $\lambda_{0,\mu,n}^{(\sigma)}$ is a countable collection of eigenvalues for each $\mu$, and the resulting stability spectrum has quadrafold symmetry. In addition, the stability spectrum coincides with the imaginary axis, implying that zero-amplitude Stokes waves are spectrally stable. \\
\indent For some $\mu = \mu_0$, nonzero eigenvalues of $\mathcal{L}_{0,\mu_0}$ with double multiplicity may give rise to Hamiltonian-Hopf bifurcations and, thus, to high-frequency instabilities for $0< |\varepsilon| \ll 1$. These eigenvalues exist provided there exists $\mu_0$, $m$, and $n$ such that

\begin{align} \label{11}
\lambda^{(\sigma_1)}_{0,\mu_0,n} = \lambda^{(\sigma_2)}_{0,\mu_0,m} \neq 0.
\end{align}

\noindent We view \eqref{11} as a collision of two simple, nonzero eigenvalues. It can be shown that such a collision occurs only if $\sigma_1 \neq \sigma_2$ \cite{akersnicholls14,deconinck17,hurpandey19}. Theorem 4 in Appendix B shows that, for any $p \in \mathbb{Z} \setminus \{0,\pm 1\}$, there exist unique $\mu_0$, $m$, and $n$ that satisfy \eqref{11} with $m-n = p$. Thus, there are a countably infinite number of nonzero eigenvalue collisions in the zero-amplitude stability spectrum; each of which has potential to develop a high-frequency instability in the small-amplitude stability spectrum. \\\\
\textbf{Remark}. Using results in Appendix B, it can be shown that the Krein signatures \cite{krein51} of the colliding eigenvalues have opposite signs. This is a second necessary criterion for the occurance of high-frequency instabilities \cite{deconinck17,mackay86}. \\\\
\textbf{Remark}. The WWP shares the same collided eigenvalues with HPT--BW, since \eqref{10b} is also the dispersion relation of the WWP.

%%%%%%%%%%%%%%%%%%%%%%%%%%%%%%%%%%%%%%%%%%
\section{High-Frequency Instabilities: $p = 2$}
We use perturbation methods to investigate the high-frequency instability that develops from the collision of $\lambda^{(1)}_{0,\mu_0,n}$ and $\lambda^{(-1)}_{0,\mu_0,m}$, where $\mu_0 \in [-1/2,1/2]$ is the unique Floquet exponent for which \eqref{11} is satisfied and\footnote{Because the spectrum \eqref{10b} has the symmetry $\overline{\lambda}^{(\sigma)}_{0,-\mu_0,-n} = \lambda^{(\sigma)}_{0,\mu_0,n}$, where the overbar denotes complex conjugation, choosing $p = -2$ gives the isola conjugate to that for $p=2$. Thus, we may choose $p = 2$ without loss of generality.} $m-n = 2$. This instability corresponds to the high-frequency isola closest to the origin; see Theorem 4 in Appendix B. For sufficiently small $\varepsilon$, this is also the isola with largest real component. \\
\subsection{The $\mathcal{O}\left(\varepsilon^0\right)$ Problem \nopunct} ~\\\\
The $p=2$ isola develops from the spectral data

\begin{subequations} \label{12}
\begin{align}
\lambda_0 &= \lambda^{(1)}_{0,\mu_0,n} = -i\Omega_{1}(\mu_0+n) = -i\Omega_{-1}(\mu_0+m) = \lambda^{(-1)}_{0,\mu_0,m} \neq 0, \\
{\bf{w_0}}(x) &= \begin{pmatrix} \mathfrak{h}_0(x)  \\ \mathfrak{u}_0(x) \end{pmatrix} = \gamma_0 \begin{pmatrix}1\\-\frac{\omega_\alpha(m+\mu_0)}{\alpha(m+\mu_0)}\end{pmatrix} e^{imx} + \gamma_1 \begin{pmatrix} 1 \\ \frac{\omega_\alpha(n+\mu_0)}{\alpha(n+\mu_0)} \end{pmatrix} e^{inx},
\end{align}
\end{subequations}

\noindent where $\gamma_j$ are arbitrary, nonzero constants. As $|\varepsilon|$ increases, we assume the spectral data vary analytically \cite{akersnicholls14} with $\varepsilon$:

\begin{subequations} \label{13}
\begin{align}
\lambda &= \lambda_0 + \varepsilon \lambda_1 + \varepsilon^2 \lambda_2 + \mathcal{O}\left( \varepsilon^3 \right), \\
{\bf{w}} &= {\bf{w_0}} + \varepsilon {\bf{w_1}} + {\varepsilon^2 \bf{w_2}} + \mathcal{O}\left(\varepsilon^3 \right) \\
&= \begin{pmatrix} \mathfrak{h}_0 \\ \mathfrak{u}_0\end{pmatrix} + \varepsilon \begin{pmatrix} \mathfrak{h}_1 \\ \mathfrak{u}_1\end{pmatrix} + \varepsilon^2 \begin{pmatrix} \mathfrak{h}_2 \\ \mathfrak{u}_2\end{pmatrix} +  \mathcal{O}\left(\varepsilon^3 \right),
\end{align}
\end{subequations}

\noindent where we suppress functional dependencies for ease of notation. We normalize $\bf{w}$ so that

\begin{align}
\widehat{\mathfrak{h}}(n)  = \frac{1}{2\pi} \int_{-\pi}^{\pi} \mathfrak{h}e^{-inx} dx = 1,
\end{align}

\noindent or, alternatively, so that

\begin{equation} \label{13b}
\begin{aligned}
\widehat{\mathfrak{h}_0}(n) &=  1, \\
\widehat{\mathfrak{h}_j}(n) &= 0, \quad \forall j \in \mathbb{N}.
\end{aligned}
\end{equation}

\noindent This normalization ensures that $\mathfrak{h}_0$ fully resolves the $n^{\textrm{th}}$ Fourier mode of $\mathfrak{h}$, a convenient choice for the perturbation calculations that follow. With this normalization,

\begin{align} \label{13c}
{\bf{w_0}}(x) = \begin{pmatrix} \mathfrak{h}_0(x)  \\ \mathfrak{u}_0(x) \end{pmatrix} = \gamma_0 \begin{pmatrix}1\\-\frac{\omega_\alpha(m+\mu_0)}{\alpha(m+\mu_0)}\end{pmatrix} e^{imx} +  \begin{pmatrix} 1 \\ \frac{\omega_\alpha(n+\mu_0)}{\alpha(n+\mu_0)} \end{pmatrix} e^{inx}.
\end{align}

\noindent The arbitrary constant $\gamma_0$ will be determined at higher order, leading to a unique expression for $\bf{w_0}$.  \\\\
\textbf{Remark}. The eigenvalue corrections $\lambda_j$ derived below are independent of the normalization chosen for $\bf{w}$.
\\\\
If $\lambda_0$ is a semi-simple, isolated eigenvalue of $\mathcal{L}_{0,\mu_0}$, we may justify \eqref{13} using analytic perturbation theory \cite{kato66}, provided the Floquet exponent is fixed. For $\varepsilon$ sufficiently small, this method of proof gives two spectral elements on the isola. Numerical and asymptotic calculations show that these spectral elements quickly leave the isola as a result of the change in its Floquet parameterization with $\varepsilon$ (Figure~\ref{fig1b}, Figure~\ref{fig4}, Figure~\ref{fig9}). To account for this variation, we allow the Floquet exponent to depend on $\varepsilon$ as well:

\begin{align} \label{14}
\mu = \mu_0 + \varepsilon \mu_1 + \varepsilon^2 \mu_2 + \mathcal{O}\left( \varepsilon^3 \right).
\end{align}

%\noindent As will be seen, $\mu_1 = 0$, and $\mu_2$ is constrained to an interval that parameterizes an ellipse asymptotic to the $p=2$ isola. %\\\\

\noindent \textbf{Remark}. In the calculations that follow, explicit expressions of select quantities are suppressed for ease of readability. The interested reader may consult the supplemental Mathematica file \emph{hptbw\_isolap2.nb} for these expressions. \\

\subsection{The $\mathcal{O}\left(\varepsilon\right)$ Problem \nopunct} ~\\\\
Substituting the expansions of the Stokes wave \eqref{6}, spectral data \eqref{13}, and Floquet exponent \eqref{14} into the spectral problem \eqref{10} and collecting terms of $\mathcal{O}(\varepsilon)$, we find

\begin{align} \label{15}
\left(\mathcal{L}_{0,\mu_0} - \lambda_0\right){\bf{w_1}} = \lambda_1 {\bf{w_0}} - \textrm{$\mathcal{L}_1$} \bf{w_0},
\end{align}

\noindent with

\begin{align}
\mathcal{L}_1 = \alpha \begin{pmatrix} -u_1' + ic_0\mu_1 - u_1(i\mu_0 + \partial_x) & -\eta_1' - i\mu_1 - \eta_1(i\mu_0 + \partial_x) \\ -i\mu_1\sech^2(\alpha(\mu_0+D)) & -u_1' + ic_0\mu_1 - u_1(i\mu_0+\partial_x) \end{pmatrix}.
\end{align}

\noindent The inhomogeneous terms on the RHS of \eqref{15} can be evaluated using expressions for $\eta_1$, $u_1$, and $\bf{w_0}$. Each of these quantities are finite linear combinations of $2\pi$-periodic sinusoids. As a result, the inhomogeneous terms can be rewritten as a finite Fourier series, and \eqref{15} becomes

\begin{align} \label{16}
\left(\mathcal{L}_{0,\mu_0} - \lambda_0\right){\bf{w_1}} = \sum_{j = n-1}^{m+1} {\bf{T_{1,j}}}\textrm{$e^{ijx}$},
\end{align}

\noindent where $\bf T_{1,j}$ depend on $\mu_0$, $\alpha$, and $\gamma_0$; see the Mathematica file for details. \\\\
\textbf{Remark}. Since $m-n = 2$, the index $j \in \{n-1,n,1+n,m,m+1\}$. When evaluating the inhomogeneous terms, one finds vector multiples of exp$(i(1+n)x)$ and exp$(i(m-1)x)$. These vectors are combined to give $\bf{T_{1,1+n}}$. \\\\
\indent For \eqref{16} to have a solution $\bf{w_1}$, the inhomogeneous terms must be orthogonal (in the L$^2_{\textrm{per}}(-\pi,\pi) \times \textrm{L}^2_{\textrm{per}}(-\pi,\pi)$ sense) to the nullspace of the hermitian adjoint of $\mathcal{L}_{0,\mu_0} - \lambda_0$ by the Fredholm alternative. The hermitian adjoint of $\mathcal{L}_{0,\mu_0} - \lambda_0$ is

\begin{align} \label{17}
\left( \mathcal{L}_{0,\mu_0} - \lambda_0 \right)^\dagger =  \begin{pmatrix} - \alpha c_0(i\mu_0+\partial_x) - \overline{\lambda_0} & \tan(\alpha(i\mu_0+\partial_x))\\ \alpha (i\mu_0+\partial_x) & - \alpha c_0(i\mu_0+\partial_x) - \overline{\lambda_0} \end{pmatrix},
\end{align}

\noindent where overbars denote complex conjugation. Its nullspace is

\begin{align}
\textrm{Null}\left[ \left( \mathcal{L}_{0,\mu_0} - \lambda_0 \right)^\dagger \right] = \textrm{Span} \left[\begin{pmatrix} 1 \\ \frac{\alpha(\mu_0+n)}{\omega_\alpha(\mu_0+n)} \end{pmatrix}e^{inx},\begin{pmatrix}  1 \\ -\frac{\alpha(\mu_0+m)}{\omega_\alpha(\mu_0+m)}\end{pmatrix} e^{imx} \right].
\end{align}

\noindent Thus, according to the Fredholm alternative, there exists a solution $\bf{w_1}$ to \eqref{16} if

\begin{align}
\left< \begin{pmatrix} 1 \\ \frac{\alpha(\mu_0+n)}{\omega_\alpha(\mu_0+n)} \end{pmatrix} e^{inx}, \bf{T_{1,n}} \textrm{$e^{inx}$} \right> = 0, \quad \left< \begin{pmatrix} 1 \\ -\frac{\alpha(\mu_0+m)}{\omega_\alpha(\mu_0+m)} \end{pmatrix} e^{imx}, \bf{T_{1,m}} \textrm{$e^{imx}$} \right> = 0,
\end{align}

\noindent where $\left<\cdot,\cdot\right>$ is the standard inner product on L$^2_{\textrm{per}}(-\pi,\pi) \times \textrm{L}^2_{\textrm{per}}(-\pi,\pi)$.  Substituting expressions for $\bf{T_{1,n}}$ and $\bf{T_{1,m}}$ gives solvability conditions

\begin{subequations}
\begin{align}
\lambda_1 + i\mu_1c_{g_1}(\mu_0+n) &= 0, \\
\gamma_0\left(\lambda_1 + i \mu_1 c_{g_{-1}}(\mu_0+m)\right) &= 0,
\end{align}
\end{subequations}

\noindent where $c_{g_\sigma}(k) = \Omega'_{\sigma}(k)$ is the group velocity of $\Omega_{\sigma}$. Lemma 3 in Appendix B shows that $c_{g_{1}}(\mu_0+n) \neq c_{g_{-1}}(\mu_0+m)$. Since $\gamma_0$ is nonzero,

\begin{align} \label{18}
\lambda_1 = 0 = \mu_1.
\end{align}

\noindent Consequently, $\bf{T_{1,n}} = \bf{0} = \bf{T_{1,m}} $, simplifying the inhomogeneous terms in \eqref{16}. \\
\indent With the solvability conditions satisfied, we solve for the particular solution of $\bf{w_1}$ in \eqref{16}. Combining with the nullspace of $\mathcal{L}_{0,\mu_0} - \lambda_0$,

\begin{align}
\bf{w_1} = \textrm{$\displaystyle \sum_{\substack{j=n-1 \\ j \neq n,m}}^{m+1}$} \bf{\mathcal{W}_{1,j}} \textrm{$e^{ijx}$} + \textrm{$ \beta_{1,m} \begin{pmatrix}1\\-\frac{\omega_\alpha(m+\mu_0)}{\alpha(m+\mu_0)}\end{pmatrix} e^{imx}$} +\textrm{$\beta_{1,n}\begin{pmatrix} 1 \\ \frac{\omega_\alpha(n+\mu_0)}{\alpha(n+\mu_0)} \end{pmatrix} e^{inx},$}
\end{align}

\noindent where $\beta_{1,j}$ are arbitrary constants and ${\bf \mathcal{W}_{1,j}}$ are found in the Mathematica file. Enforcing the normalization condition \eqref{13b}, one finds $\beta_{1,n} = 0$. For ease of notation, let $\beta_{1,m} \rightarrow \gamma_1$ so that

\begin{align}
\bf{w_1} = \textrm{$\displaystyle \sum_{\substack{j=n-1 \\ j \neq n,m}}^{m+1}$} \bf{\mathcal{W}_{1,j}} \textrm{$e^{ijx}$} + \textrm{$ \gamma_1 \begin{pmatrix}1\\-\frac{\omega_\alpha(m+\mu_0)}{\alpha(m+\mu_0)}\end{pmatrix} e^{imx}$}.
\end{align}
~\\
\subsection{The $\mathcal{O}\left(\varepsilon^2\right)$ Problem \nopunct} ~\\\\
Using \eqref{18}, the spectral problem \eqref{10} at $\mathcal{O}\left(\varepsilon^2\right)$ is

\begin{align} \label{18b}
\left(\mathcal{L}_{0,\mu_0} - \lambda_0 \right){\bf{w_2}} = \lambda_2{\bf{w_0}} - \mathcal{L}_2|_{\mu_1=0} {\bf{w_0}} - \mathcal{L}_1|_{\mu_1 = 0} {\bf{w_1}},
\end{align}

\noindent where $\mathcal{L}_1|_{\mu_1 = 0}$ is the same as above, but evaluated at $\mu_1 = 0$, and

\begin{align} \label{19}
\mathcal{L}_2|_{\mu_1 = 0} = \alpha \begin{pmatrix}  -u_2' + (c_2-u_2)(i\mu_0+\partial_x) + i\mu_2 c_0 & -\eta_2' -i\mu_2 -\eta_2(i\mu_0+\partial_x) \\ -i\mu_2\sech^2(\alpha(\mu_0+D)) & -u_2' + (c_2-u_2)(i\mu_0+\partial_x) + i\mu_2 c_0                    \end{pmatrix}.
\end{align}

\noindent One can evaluate the inhomogeneous terms of \eqref{19} using $\eta_{j}$, $u_{j}$, and $\bf{w_{j-1}}$ for $j \in \{1,2\}$. These inhomogeneous terms can be expressed as a finite Fourier series, giving

\begin{align} \label{21}
\left(\mathcal{L}_{0,\mu_0} - \lambda_0 \right){\bf{w_2}} = \sum_{\substack{j=n-2 \\ j \neq n-1}}^{m+2} {\bf{T_{2,j}}}e^{ijx}.
\end{align}

\noindent It can be shown that $\bf{T_{2,n-1}} = \bf{0}$. \\
\indent Proceeding similarly to the previous order, solvability conditions for \eqref{21} are

\begin{subequations}
\begin{align}
2\left(\lambda_2 + i \mathcal{C}_{1,n}\right) + i\gamma_0\mathcal{S}_{2,n} &= 0, \label{22a} \\
2\gamma_0\left(\lambda_2 + i \mathcal{C}_{-1,m}\right)  + i \mathcal{S}_{2,m} &= 0, \label{22b}
\end{align}
\end{subequations}

\noindent where

\begin{subequations} \label{21b}
\begin{align}
\mathcal{C}_{1,n} &= \mu_2 c_{g_1}(\mu_0+n) - \mathcal{P}_{2,n}, \\
\mathcal{C}_{-1,m} &= \mu_2 c_{g_{-1}}(\mu_0+m) - \mathcal{P}_{2,m}.
\end{align}
\end{subequations}

\noindent Expressions for $\mathcal{S}_{2,j}$ and $\mathcal{P}_{2,j}$ have no dependence on $\gamma_0$, $\gamma_1$, $\mu_2$, or $\lambda_2$; see the attached Mathematica file for details. \\
\indent Conditions \eqref{22a} and \eqref{22b} form a nonlinear system for $\gamma_0$ and $\lambda_2$. Solving for $\lambda_2$ yields

\begin{align}
\lambda_2 = -i\left( \frac{\mathcal{C}_{-1,m} + \mathcal{C}_{1,n}}{2}\right) \pm \sqrt{-\left(  \frac{\mathcal{C}_{-1,m} - \mathcal{C}_{1,n}}{2} \right)^2 - \frac{\mathcal{S}_{2,n}\mathcal{S}_{2,m}}{4}}.
\end{align}

\noindent A direct calculation shows that

\begin{align} \label{22c}
\mathcal{S}_{2,n}\mathcal{S}_{2,m} = - \frac{\mathcal{S}_2^2}{\omega_{\alpha}(\mu_0+m) \omega_{\alpha}(\mu_0+n)},
\end{align}

\noindent where $\mathcal{S}_2$ is given in the attached Mathematica file. Then,

\begin{align} \label{22}
\lambda_2 = -i\left( \frac{\mathcal{C}_{-1,m} + \mathcal{C}_{1,n}}{2}\right) \pm \sqrt{-\left(  \frac{\mathcal{C}_{-1,m} - \mathcal{C}_{1,n}}{2} \right)^2 + \frac{\mathcal{S}_2^2}{4\omega_{\alpha}(\mu_0+m) \omega_{\alpha}(\mu_0+n)}}.
\end{align}

\noindent A corollary of Lemma 3 in Appendix B shows that $\omega_{\alpha}(\mu_0+m)\omega_{\alpha}(\mu_0+n)$ is positive\footnote{This corollary is equivalent to satisfying the Krein signature condition mentioned in Section 3.}. Provided $\mathcal{S}_2 \neq 0$ and $c_{g_{-1}}(\mu_0+m) \neq c_{g_1}(\mu_0+n)$, $\lambda_2$ has nonzero  real part for $\mu_2 \in (M_{2,-},M_{2,+})$, where

\begin{equation}
\begin{aligned}
M_{2,\pm} =&~ \mu_{2,*} \pm \frac{|\mathcal{S}_2|}{|c_{g_{-1}}(\mu_0+m) - c_{g_1}(\mu_0+n)|\sqrt{\omega_{\alpha}(\mu_0+m)\omega_{\alpha}(\mu_0+n)}},
\end{aligned}
\end{equation}

\noindent and

\begin{equation}
\mu_{2,*} = \frac{\mathcal{P}_{2,m} - \mathcal{P}_{2,n}}{c_{g_{-1}}(\mu_0+m) - c_{g_1}(\mu_0+n)}.
\end{equation}

\noindent That $c_{g_{-1}}(\mu_0+m) \neq c_{g_1}(\mu_0+n)$ follows from Lemma 2 in Appendix B. A plot of $\mathcal{S}_2$ as a function of $\alpha$ suggests that $\mathcal{S}_2 > 0$ for all values of $\alpha > 0$ (Figure~\ref{fig2}). {\bf We conjecture that HPT--BW Stokes waves of any wavenumer experience a $p=2$ high-frequency instability} at $\mathcal{O}\left(\varepsilon^2\right)$.

\begin{figure}[tb]
\centering \includegraphics[width=6.5cm,height=5cm]{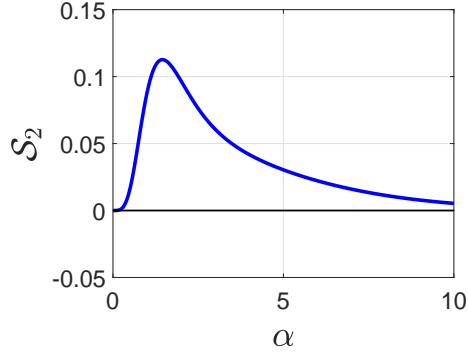}
\caption{A plot of $\mathcal{S}_2$ vs. $\alpha$. No roots of $\mathcal{S}_2$ are found for $\alpha > 0$. It is likely that HPT--BW Stokes waves of all wavenumbers experience a $p=2$ instability.}
\label{fig2}
\end{figure}

For $\mu_2 \in (M_{2,-},M_{2,+})$, a quick calculation shows that \eqref{22} parameterizes an ellipse asymptotic to the numerically observed $p=2$ high-frequency isola (Figure~\ref{fig3}). The ellipse has semi-major and -minor axes that scale with $\varepsilon^2$, and the center of the ellipse drifts along the imaginary axis like $\varepsilon^2$ from $\lambda_0$, the collision point at $\varepsilon = 0$.

\begin{figure}[tb]
\centering \includegraphics[width=12cm,height=6cm]{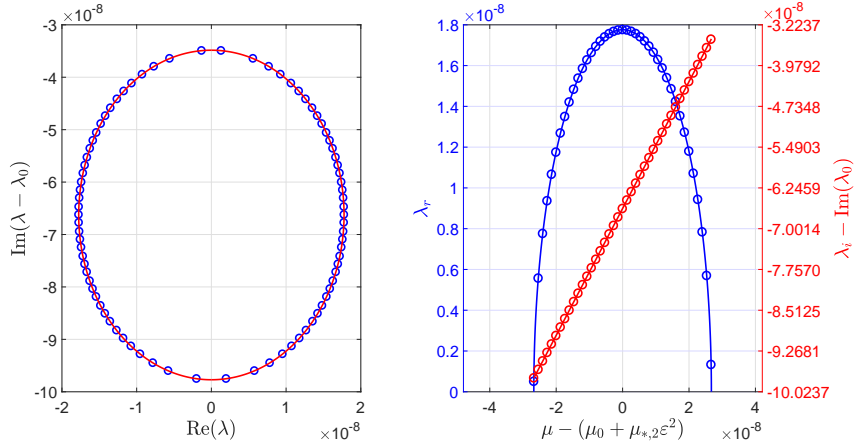}
\caption{(Left) The $p=2$ isola with $\alpha = 1$ and $\varepsilon = 5 \times 10^{-4}$ (zero-order imaginary correction removed for better visibility). The solid red curve is the ellipse obtained by our perturbation calculations. Blue circles are a subset of spectral elements from the numerically computed isola using FFH. (Right) The Floquet parameterization of the real (blue) and imaginary (red) components of the isola (zero- and second-order Floquet corrections and zero-order imaginary correction removed for better visibility). Solid curves illustrate perturbation results. Circles indicate FFH results. }
\label{fig3}
\end{figure}

The midpoint of $(M_{2,-},M_{2,+})$ maximizes the real part of $\lambda_2$. Thus, the most unstable spectral element of the isola has Floquet exponent

\begin{align}
\mu_* = \mu_0 + \mu_{2,*}\varepsilon^2 + \mathcal{O}\left(\varepsilon^3\right),
\end{align}

\noindent and its real and imaginary components are

\begin{subequations}
\begin{align}
\lambda_{r,*} &= \left( \frac{|\mathcal{S}_2|}{2\sqrt{\omega_\alpha(\mu_0+m)\omega_\alpha(\mu_0+n)}}\right) \varepsilon^2 + \mathcal{O}\left(\varepsilon^3\right), \\
\lambda_{i,*} &= -\Omega_{1}(\mu_0+n) - \mathcal{C}_{1,n}\varepsilon^2 + \mathcal{O}\left(\varepsilon^3\right),
\end{align}
\end{subequations}

\noindent respectively. These expansions agree well with the FFH results, see Figure~\ref{fig4}.

\begin{figure}[tb]
\centering \includegraphics[width=12cm,height=6cm]{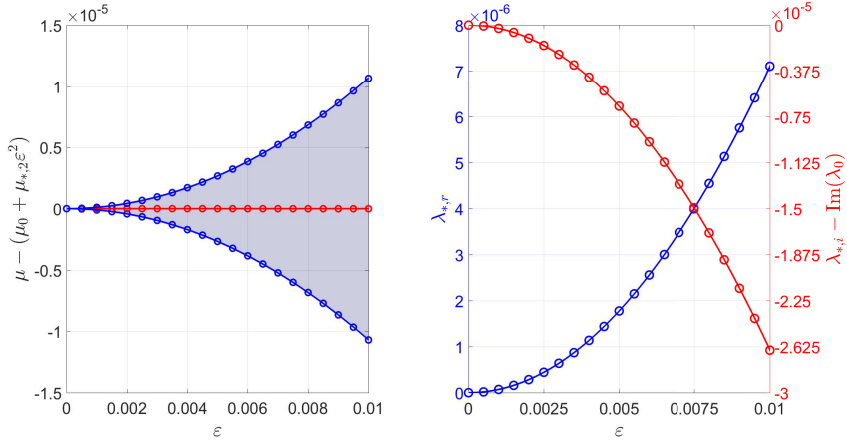}
\caption{(Left) The interval of Floquet exponents that parameterize the $p=2$ isola as a function of $\varepsilon$ with $\alpha = 1$ (zero- and second-order Floquet corrections removed for better visibility). Up to recentering the Floquet axis and accounting for a wider range of $\varepsilon$, this plot is identical to the right panel of Figure~\ref{fig1b}. Solid blue curves indicate the boundaries of this interval according to our perturbation calculations. Blue circles indicate the boundaries computed numerically by FFH. The solid red curve gives the Floquet exponent corresponding to the most unstable spectral element of the isola according to our perturbation calculations. Red circles indicate the same but computed numerically using FFH. (Right) The real (blue) and imaginary (red) components of the most unstable spectral element of the isola as a function of $\varepsilon$ (zero-order imaginary correction removed for better visibility). Solid curves illustrate perturbation calculations. Circles illustrate FFH results. }
\label{fig4}
\end{figure}

\section{High-Frequency Instabilities: $p=3$}
According to Theorem 4 in Appendix B, the $p=3$ high-frequency instability is the second-closest to the origin. As will be seen, this instability arises at $\mathcal{O}\left(\varepsilon^3\right)$. Let $\mu_0$ correspond to the unique Floquet exponent in $[-1/2,1/2]$ that satisfies the collision condition \eqref{11} with $m-n=3$. Then, the spectral data \eqref{12} give rise to the $p=3$ high-frequency instability. We assume these data and the Floquet exponent vary analytically with $\varepsilon$. For uniqueness, we normalize the eigenfunction $\bf{w}$ according to \eqref{13b} so that $\bf{w_0}$ is given by \eqref{13c}. We proceed as in the $p=2$ case. \\\\
\textbf{Remark}. In the calculations that follow, explicit expressions of select quantities are suppressed for ease of readability. The interested reader may consult the supplemental Mathematica file \emph{hptbw\_isolap3.nb} for these expressions. \\
\subsection{The $\mathcal{O}\left(\varepsilon \right)$ Problem \nopunct} ~\\\\
Substituting expansions \eqref{6}, \eqref{13}, and \eqref{14} into the spectral problem \eqref{10}, equating terms of $\mathcal{O}(\varepsilon)$, and using expression for $\eta_1$, $u_1$, and $\bf{w_0}$ to simplify, we find

\begin{align} \label{23}
\left(\mathcal{L}_{0,\mu_0} - \lambda_0\right){\bf{w_1}} = \sum_{j = n-1}^{m+1} \bf{T_{1,j}}\textrm{$e^{ijx}$}.
\end{align}

\noindent Expressions for $\bf{T_{1,j}}$ depend on $\mu_0$, $\alpha$, and $\gamma_0$; see the Mathematica file attached. Since $m-n=3$, $j \in \{n-1,n,1+n,m-1,m,m+1\}$. The functional expressions for $\bf{T_{1,n-1}}$ and $\bf{T_{1,m+1}}$ are identical to those in the $p=2$ case\footnote{They do not evaluate to the same vectors, however, as $\mu_0$ is different for $p=2$ and $p=3$ in general.}. \\
\indent Solvability conditions for \eqref{23} simplify to $\mu_1 = 0 = \lambda_1$. Together with the normalization \eqref{13b}, these conditions guarantee a solution to \eqref{23} of the form

\begin{align}
\bf{w_1} = \textrm{$\displaystyle \sum_{\substack{j=n-1 \\ j \neq n,m}}^{m+1}$} \bf{\mathcal{W}_{1,j}} \textrm{$e^{ijx}$} + \textrm{$ \gamma_{1} \begin{pmatrix}1\\-\frac{\omega_\alpha(m+\mu_0)}{\alpha(m+\mu_0)}\end{pmatrix} e^{imx}$} ,
\end{align}

\noindent where $\gamma_1$ is arbitrary and expressions for $\bf{\mathcal{W}_{1,j}}$ are found in the supplemental Mathematica file. Because $\bf{T_{1,n-1}}$ and $\bf{T_{1,m+1}}$ are identical to their $p=2$ counterparts, $\bf{\mathcal{W}_{1,n-1}}$ and $\bf{\mathcal{W}_{1,m+1}}$ are as well. \\
\subsection{The $\mathcal{O}\left(\varepsilon^2 \right)$ Problem \nopunct} ~\\\\
The $\mathcal{O}\left( \varepsilon^2 \right)$ problem takes the same form as \eqref{18b}. Evaluating at $\eta_{j}$, $u_{j}$, and $\bf{w_{j-1}}$ for $j \in \{1,2\}$, we find

\begin{align} \label{24}
\left(\mathcal{L}_{0,\mu_0} - \lambda_0 \right){\bf{w_2}} = \sum_{\substack{j=n-2 \\ j \neq n-1}}^{m+2} {\bf{T_{2,j}}}e^{ijx},
\end{align}

\noindent For the same reasons as in the $p=2$ case, $\bf{T_{2,n-1}} = \bf{0}$, and expressions for $\bf{T_{2,n-2}}$ and $\bf{T_{2,m+2}}$ are identical to their $p=2$ counterparts. \\
\indent Since $\gamma_0 \neq 0$, the solvability conditions for \eqref{24} simplify to

\begin{subequations}
\begin{align}
\lambda_2 + i\mu_2c_{g_1}(\mu_0+n) - i\mathcal{P}_{2,n} &= 0, \label{25a} \\
\lambda_2 + i\mu_2 c_{g_{-1}}(\mu_0+m) - i\mathcal{P}_{2,m} &= 0, \label{25b}
\end{align}
\end{subequations}

\noindent where $\mathcal{P}_{2,j}$ are independent of $\lambda_2$, $\mu_2$, $\gamma_0$, and $\gamma_1$; see supplemental Mathematica file. Note that these terms are distinct from those introduced in \eqref{21b}. \\
\indent Solving \eqref{25a} and \eqref{25b} for $\lambda_2$ and $\mu_2$ yields

\begin{subequations}
\begin{align}
\lambda_2 &= -i\left( \frac{\mathcal{P}_{2,m}c_{g_1}(\mu_0+n) - \mathcal{P}_{2,n}c_{g_{-1}}(\mu_0+m)               }{c_{g_{-1}}(\mu_0+m) - c_{g_1}(\mu_0+n)       } \right),  \label{25c} \\
\mu_2 &= \frac{\mathcal{P}_{2,m} - \mathcal{P}_{2,n}}{c_{g_{-1}}(\mu_0+m) - c_{g_1}(\mu_0+n)}. \label{25d}
\end{align}
\end{subequations}

\noindent Thus the spectral elements and Floquet parameterization of the $p=3$ isola have nontrivial corrections at $\mathcal{O}\left(\varepsilon^2\right)$. However, since $\textrm{Re}\left(\lambda_2\right) = 0$, we have yet to determine the leading-order behavior of the isola. We find this at the next order. \\
\indent Imposing solvability conditions \eqref{25a} and \eqref{25b} as well as the normalization condition on $\bf{w_2}$, the solution of \eqref{24} is

\begin{align} \label{26}
\bf{w_2} = \textrm{$\displaystyle \sum_{\substack{j=n-2 \\ j \neq n-1}}^{m+1}$} \bf{\mathcal{W}_{2,j}} \textrm{$e^{ijx}$} + \textrm{$ \gamma_{2} \begin{pmatrix}1\\-\frac{\omega_\alpha(m+\mu_0)}{\alpha(m+\mu_0)}\end{pmatrix} e^{imx}$} ,
\end{align}

\noindent where $\gamma_2$ is an arbitrary constant. Since $\bf{T_{2,n-1}} = \bf{0}$, $\bf{\mathcal{W}_{2,n-1}} = \bf{0}$. \\
\subsection{The $\mathcal{O}\left( \varepsilon^3 \right)$ Problem \nopunct} ~\\\\
At $\mathcal{O}\left(\varepsilon^3 \right)$, the spectral problem \eqref{10} takes the form

\begin{align} \label{27}
\left( \mathcal{L}_{0,\mu_0} - \lambda_0 \right){\bf{w_3}} = \sum_{j =2}^{3} \lambda_j {\bf{w}_{3-j}} - \sum_{j=1}^{3} \mathcal{L}_{j}|_{\mu_1 = 0} {\bf{w_{3-j}}} ,
\end{align}

\noindent where $\mathcal{L}_j|_{\mu_1 = 0}$ for $j \in \{1,2\}$ are as before and

\begin{align}
\mathcal{L}_3|_{\mu_1 = 0} = \alpha \begin{pmatrix} -u_3' - i\mu_2u_1-u_3(i\mu_0 + \partial_x) + i\mu_3 c_0 & -\eta_3'-i\mu_3 -i\eta_1\mu_2 -\eta_3(i\mu_0+\partial_x) \\ -\mu_3 \sech^2(\alpha(\mu_0+D)) & u_3' - i\mu_2u_1-u_3(i\mu_0 + \partial_x) + i\mu_3 c_0  \end{pmatrix}.
\end{align}

\noindent Evaluating \eqref{27} at $\eta_j$, $u_j$, and $\bf{w_{j-1}}$ for $j \in \{1,2,3\}$, one finds

\begin{align} \label{28}
\left(\mathcal{L}_{0,\mu_0} - \lambda_0 \right){\bf{w_3}} = \sum_{\substack{j=n-3 \\ j \neq n-2}}^{m+3} {\bf{T_{3,j}}}e^{ijx},
\end{align}

\noindent where $\bf{T_{3,n-2}} = \bf{0}$. \\
\indent The solvability conditions for \eqref{28} are

\begin{subequations}
\begin{align}
2(\lambda_3 + i\mu_3c_{g_1}(\mu_0+n)) + i\gamma_0 \mathcal{S}_{3,n} &= 0, \label{28a} \\
2\gamma_0(\lambda_3 + i\mu_3 c_{g_{-1}}(\mu_0+m)) + i\mathcal{S}_{3,m} + i \gamma_1 \mathcal{T}_{3,m} &= 0,\label{28b}
\end{align}
\end{subequations}

\noindent where $\mathcal{S}_{3,j}$ and $\mathcal{T}_{3,m}$ have no dependence on $\gamma_0$, $\gamma_1$, $\mu_3$, or $\lambda_3$; see supplemental Mathematica file. Using \eqref{25a} and \eqref{25b} from the previous order as well as \eqref{11}, one can show that $\mathcal{T}_{3,m} \equiv 0$. In addition, similar to \eqref{22c} for the $p=2$ isola, we have

\begin{align}
\mathcal{S}_{3,n}\mathcal{S}_{3,m} = - \frac{\mathcal{S}_3^2}{\omega_{\alpha}(\mu_0+m)\omega_{\alpha}(\mu_0+n)},
\end{align}

\noindent where $\mathcal{S}_3$ is given in the supplemental Mathematica file. As a result, \eqref{28a} and \eqref{28b} form a nonlinear system for $\lambda_3$ and $\gamma_0$. Solving for $\lambda_3$, one finds

\begin{align} \label{28c}
\lambda_3 =&~ -i\mu_3 \left( \frac{c_{g_{-1}}(\mu_0+m) + c_{g_1}(\mu_0+n)}{2} \right)  \\
&\pm \sqrt{-\mu_3^2 \left(\frac{c_{g_{-1}}(\mu_0+m) - c_{g_1}(\mu_0+n)}{2} \right)^2 +  \frac{\mathcal{S}_3^2}{4 \omega_{\alpha}(\mu_0+m)\omega_{\alpha}(\mu_0+n)}}. \nonumber
\end{align}

\noindent As in the $p=2$ case, $\omega_\alpha(\mu_0+m)\omega_\alpha(\mu_0+n) > 0$ and $c_{g_{-1}}(\mu_0+m) \neq c_{g_1}(\mu_0+n)$. Provided $\mathcal{S}_3 \neq 0$, $\lambda_3$ has nonzero real part if $\mu_3 \in (-M_3,M_3)$, where

\begin{align}
M_3 = \frac{|\mathcal{S}_3|}{|c_{g_{-1}}(\mu_0+m) - c_{g_1}(\mu_0+n)| \sqrt{\omega_\alpha(\mu_0+m)\omega_\alpha(\mu_0+n) }}.
\end{align}

\noindent A plot of $\mathcal{S}_3$ vs. $\alpha$ reveals that $\mathcal{S}_3=0$ only at $\alpha = 1.1862...$ (Figure~\ref{fig5}). For this wave aspect ratio, the $p=3$ instability does not occur at $\mathcal{O}\left(\varepsilon^3\right)$. In fact, Figure~\ref{fig5} shows that, if $\alpha$ approaches 1.1862.... for fixed $\varepsilon$, the numerically computed $p=3$ isola shrinks to a point on the imaginary axis.  \textbf{We conjecture that HPT--BW Stokes waves with $\alpha = 1.1862...$ are not succeptible to the $p=3$ instability}, even beyond $\mathcal{O}\left(\varepsilon^3\right)$. Indeed, in the next subsection, we find that $\lambda_4$ is purely imaginary, so Stokes waves with aspect ratio $\alpha = 1.1862...$ do not exhibit $p=3$ instabilities to $\mathcal{O}\left(\varepsilon^4\right)$.

Assuming $\alpha \neq 1.1862...$, $\mu_3 \in (-M_3,M_3)$ parameterizes an ellipse asymptotic to the $p=3$ high-frequeny isola; see Figure~\ref{fig6}. The ellipse has semi-major and -minor axes that scale with $\varepsilon^3$. The center of this ellipse drifts along the imaginary axis like $\varepsilon^2$ due to the purely imaginary correction found at $\mathcal{O}\left(\varepsilon^2\right)$.

\begin{figure}[tb]
\centering \includegraphics[width=12cm,height=6cm]{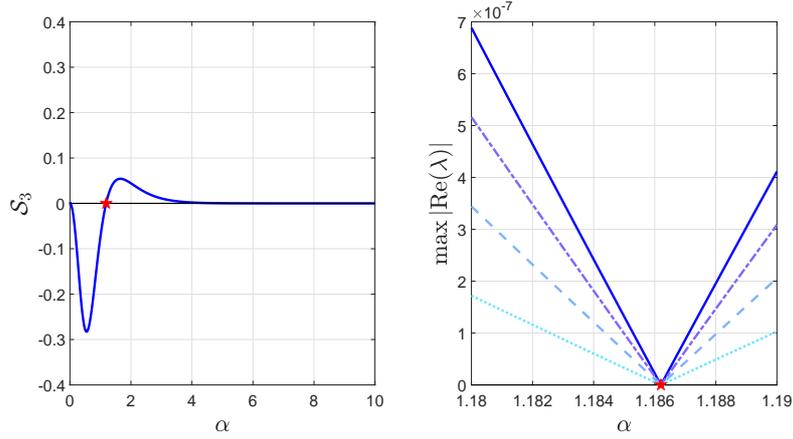}
\caption{(Left) A plot of $\mathcal{S}_3$ vs. $\alpha$. The quantity $\mathcal{S}_3$ has a root $\alpha = 1.1862...$ (red star), implying HPBT--BW Stokes waves of this aspect ratio do not have a $p=3$ instability at $\mathcal{O}\left(\varepsilon^3\right)$. (Right) A plot of the maximum real component of the numerical $p=3$ isola (computed by FFH) as a function of $\alpha$ for $\varepsilon = 10^{-3}$ (solid blue), $\varepsilon = 7.5\times 10^{-4}$ (dot-dashed purple), $\varepsilon = 5 \times 10^{-4} $ (dashed light blue), and $\varepsilon = 2.5 \times 10^{-4}$ (dotted cyan). The $p=3$ isola vanishes when $\alpha = 1.1862...$ (red star).}
\label{fig5}
\end{figure}

\begin{figure}[b!]
\centering \includegraphics[width=12cm,height=6cm]{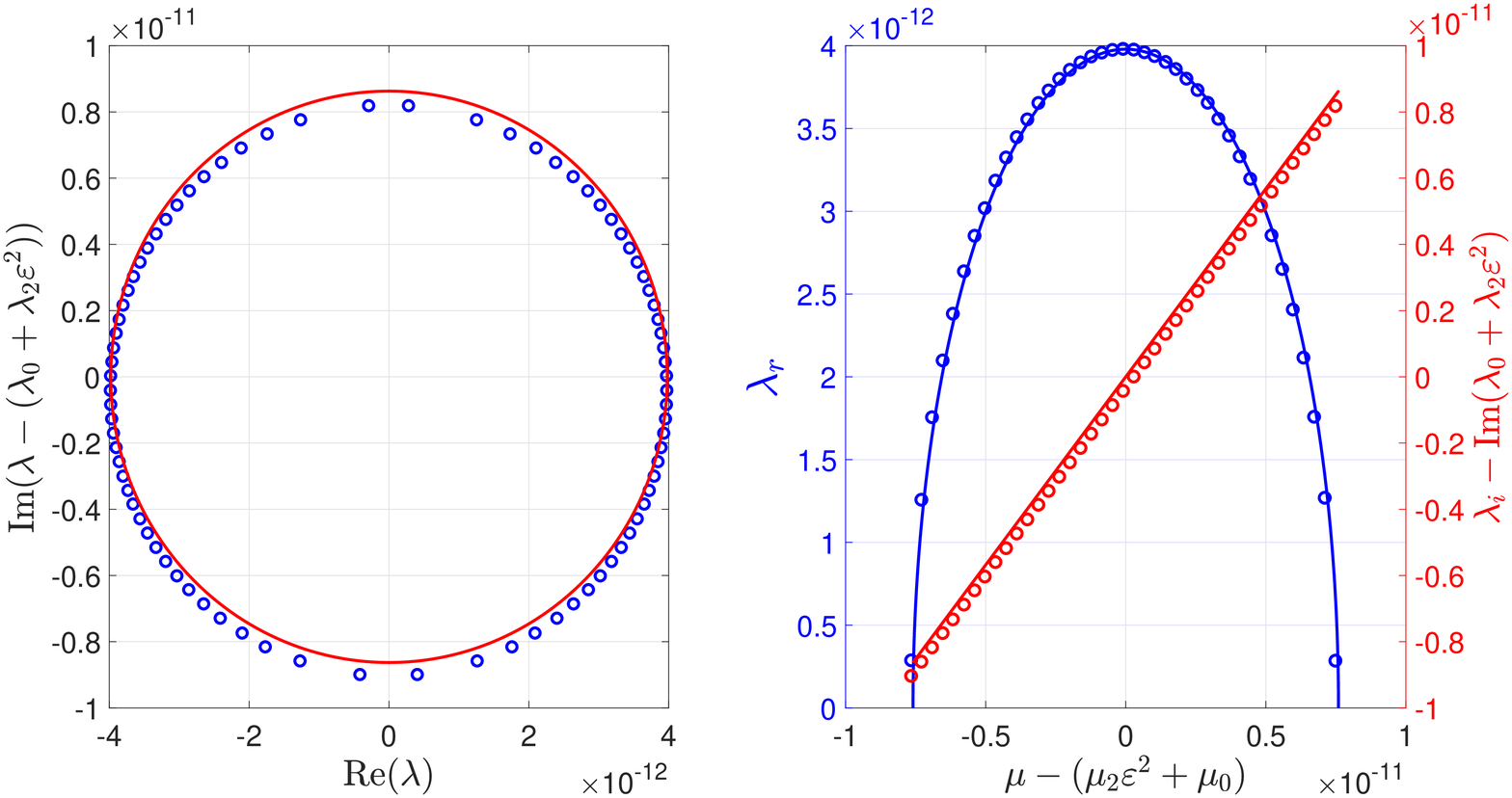}
\caption{(Left) $p=3$ isola with $\alpha = 1$ and $\varepsilon = 5 \times 10^{-4}$ (zero- and second-order imaginary corrections removed for better visibility). The solid red curve is the ellipse obtained by our perturbation calculations. The blue circles are a subset of spectral elements from the numerically computed isola using FFH. (Right) Floquet parameterization of the real (blue) and imaginary (red) components of the isola (zero- and second-order imaginary and Floquet corrections removed for better visibility). Solid curves illustrate perturbation results. Circles indicate FFH results. }
\label{fig6}
\end{figure}
~\\ \indent The interval of Floquet exponents that parameterizes the $p=3$ isola is
\begin{align} \mu \in \left(\mu_0+\mu_2\varepsilon^2 - M_3\varepsilon^3, \mu_0 + \mu_2\varepsilon^2 + M_3 \varepsilon^3\right) + \mathcal{O}\left(\varepsilon^4\right). \label{28d} \end{align}

\noindent The width of this interval is an order of magnitude smaller than that of the $p=2$ isola. Consequently, the $p=3$ isola is more challenging to find numerically than the $p=2$ isola, at least for methods similar to FFH (Table~\ref{tab1}). \\ \indent For $\alpha = 1$ and $|\varepsilon|  < 5 \times 10^{-4}$, \eqref{28d} provides an excellent approximation to the numerically computed interval of Floquet exponents (Figure~\ref{fig7}). Fourth-order corrections are necessary to improve agreement between \eqref{28d} and numerical computations for larger $\varepsilon$, see Section 5.4 below.
\begin{table}[tb]
\caption{ Intervals of Floquet exponents that parameterize the $p=2$ and $p=3$ high-frequency isolas with $\varepsilon = 10^{-3}$ and $\alpha = 1/2, 1$, and $2$. The first digit for which the boundary values disagree is underlined and colored red. If a uniform mesh of Floquet exponents in $[-1/2,1/2]$ is used for numerical methods like FFH, the spacing of the mesh must be finer than $\varepsilon^{2}$ to capture the $p=2$ instability and $\varepsilon^{3}$ to capture the $p=3$ instability. The intervals vary with $\alpha$ as well, making it difficult to adapt and refine a uniform mesh to find high-frequency isolas. }
 \centering \begin{tabular}{cc}
    \toprule
           & $p=2$  \\ \midrule
 $\alpha = \frac12$ & (-0.106478\textcolor{red}{\textbf{\underline{8}}}13547533,~-0.106478\textcolor{red}{\textbf{\underline{6}}}33575956)     \vspace*{0.2cm} \\
   $\alpha = 1$ &  (-0.26090\textcolor{red}{\textbf{\underline{9}}}131823605,~-0.26090\textcolor{red}{\textbf{\underline{8}}}917941151)       \vspace*{0.2cm} \\
   $\alpha = 2$ &  (-0.330352\textcolor{red}{\textbf{\underline{1}}}96060556,~-0.330352\textcolor{red}{\textbf{\underline{2}}}75321770)     \\ \midrule
 & $p=3$ \\ \midrule
$\alpha = \frac12$ &  (-0.37544887\textcolor{red}{\textbf{\underline{7}}}009085,~-0.37544887\textcolor{red}{\textbf{\underline{5}}}412116)   \vspace*{0.2cm}  \\
$\alpha = 1$ & (0.257196721\textcolor{red}{\textbf{\underline{1}}}00572,~0.257196721\textcolor{red}{\textbf{\underline{3}}}43587)    \vspace*{0.2cm} \\
$\alpha = 2$ & (0.0440583313\textcolor{red}{\textbf{\underline{4}}}6416,~0.0440583313\textcolor{red}{\textbf{\underline{8}}}4758)    \\ \bottomrule
  \end{tabular}

%&  (-0.375448877009085,~-0.37544887\textcolor{red}{\textbf{\underline{5}}}412116)   \vspace*{0.2cm}
%& (0.257196721100572,~0.257196721\textcolor{red}{\textbf{\underline{3}}}43587)    \vspace*{0.2cm}
%& (0.044058331346416,~0.0440583313\textcolor{red}{\textbf{\underline{8}}}4758)    \\ \bottomrule ~\\
\label{tab1}
\end{table}
 ~\\ \indent Choosing $\mu_3 = 0$ maximizes the real part of $\lambda_3$. Thus, the most unstable spectral element of the $p=3$ isola has Floquet exponent

\begin{align}
\mu_* = \mu_0 + \mu_2 \varepsilon^2 + \mathcal{O}\left(\varepsilon^4\right),
\end{align}

\noindent where $\mu_2$ is as in \eqref{25d}, and its real and imaginary components are

\begin{subequations}
\begin{align}
\lambda_{r,*} &= \left( \frac{|\mathcal{S}_3|}{2\sqrt{\omega_\alpha(\mu_0+m)\omega_\alpha(\mu_0+n)}} \right)\varepsilon^3 + \mathcal{O}\left(\varepsilon^4\right),  \\
\lambda_{i,*} &= -\Omega_1(\mu_0+n) - \left(\frac{\mathcal{P}_{2,m}c_{g_1}(\mu_0+n) - \mathcal{P}_{2,n}c_{g_{-1}}(\mu_0+m)}{c_{g_{-1}}(\mu_0+m) - c_{g_1}(\mu_0+n)} \right)\varepsilon^2 + \mathcal{O}\left(\varepsilon^4\right),
\end{align}
\end{subequations}

\noindent respectively. The expansion for $\lambda_{r,*}$ is in excellent agreement with numerical results using the FFH method (Figure~\ref{fig7}). As with \eqref{28d}, corrections to $\mu_*$ and $\lambda_{i,*}$ at $\mathcal{O}\left(\varepsilon^4\right)$ improve the agreement between numerical and asymptotic results for these quantities.

\begin{figure}[tb]
\centering \includegraphics[width=12cm,height=6cm]{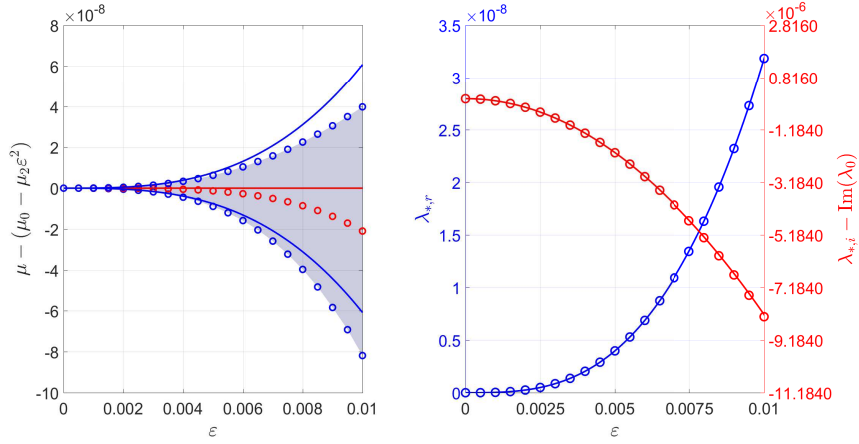}
\caption{(Left) The interval of Floquet exponents that parameterize the $p=3$ isola as a function of $\varepsilon$ with $\alpha = 1$ (zero- and second-order Floquet corrections removed for better visibility). Solid blue curves indicate the boundaries of this interval according to our perturbation calculations. Blue circles indicate the boundaries computed numerically by FFH. The solid red curve gives the Floquet exponent corresponding to the most unstable spectral element of the isola according to our perturbation calculations. Red circles indicate the same but computed numerically using FFH. (Right) The real (blue) and imaginary (red) components of the most unstable spectral element of the isola as a function of $\varepsilon$ (zero- and second-order imaginary and Floquet corrections removed for better visibility). Solid curves illustrate perturbation calculations. Circles illustrate FFH results. }
\label{fig7}
\end{figure}
\indent Before proceeding to $\mathcal{O}\left(\varepsilon^4\right)$, we solve \eqref{28} for $\bf{w_3}$, assuming solvability conditions \eqref{25a} and \eqref{25b} and normalization condition \eqref{13b} are satisfied. We find

\begin{align}
{\bf{w_3}} = \sum_{\substack{j=n-3 \\ j \neq n-2}}^{m+3} {\bf{\mathcal{W}_{3,j}}}e^{ijx} + \gamma_3 \begin{pmatrix} 1 \\ -\frac{\omega_\alpha(m+\mu_0)}{\alpha(m+\mu_0)}e^{imx} \end{pmatrix},
\end{align}

\noindent where $\gamma_3$ is arbitrary and $\bf{\mathcal{W}_{3,n-2}} = \bf{0}$ (since $\bf{T_{3,n-2}} = \bf{0}$). ~\\
\subsection{The $\mathcal{O}\left(\varepsilon^4\right)$ Problem \nopunct} ~\\\\
The spectral problem \eqref{10} is

\begin{align} \label{29}
\left( \mathcal{L}_{0,\mu_0} - \lambda_0 \right){\bf{w_4}} = \sum_{j =2}^{4} \lambda_j {\bf{w}_{3-j}} - \sum_{j=1}^{4} \mathcal{L}_{j}|_{\mu_1 = 0} {\bf{w_{3-j}}} ,
\end{align}

\noindent where $\mathcal{L}_j|_{\mu_1 = 0}$ are as before and

\begin{align}
\mathcal{L}_4|_{\mu_1 = 0} = \alpha \begin{pmatrix} \mathcal{L}_4^{(1,1)} & \mathcal{L}_4^{(1,2)} \\ \mathcal{L}_4^{(2,1)} & \mathcal{L}_4^{(1,1)} \end{pmatrix},
\end{align}

\noindent with

\begin{subequations}
\begin{align}
\mathcal{L}_4^{(1,1)} &= ic_0\mu_4 -i\mu_3u_1 + i\mu_2(c_2-u_2) + (c_4 - u_4)(i\mu_0+\partial_x) - u_4', \\
\mathcal{L}_4^{(1,2)} &= -i\mu_4 -i\mu_3\eta_1 - i\mu_2\eta_2 - \eta_4(i\mu_0+\partial_x) - \eta_4', \\
\mathcal{L}_4^{(2,1)} &= -i\mu_4 \sech^2(\alpha(\mu_0+D)) + i\alpha \mu_2^2 \sech(\alpha(\mu_0+D))\tanh(\alpha(\mu_0+D)).
\end{align}
\end{subequations}

\noindent Substituting $\eta_j$, $u_j$, and $\bf{w_{j-1}}$ for $j \in \{1,2,3\}$ into \eqref{29}, we find

\begin{align} \label{30}
\left(\mathcal{L}_{0,\mu_0} - \lambda_0 \right){\bf{w_4}} = \sum_{\substack{j=n-4\\ j \neq n-3}}^{m+4} {\bf{T_{4,j}}}e^{ijx},
\end{align}

\noindent where $\bf{T_{4,n-3}}= \bf{0}$ (since $\bf{\mathcal{W}_{3,n-2}} = \bf{0}$).  \\
\indent The solvability conditions for \eqref{30} can be expressed as

\begin{align} \label{31}
\begin{pmatrix}
2 & i\mathcal{S}_{3,n} \\ 2\gamma_0 & 2(\lambda_3 + i\mu_3c_{g_{-1}}(\mu_0+m))
\end{pmatrix}\begin{pmatrix} \lambda_4 \\ \gamma_1 \end{pmatrix} + i\gamma_2 \begin{pmatrix} 0 \\ \mathcal{T}_{4,m} \end{pmatrix} = -2i \begin{pmatrix} \mu_4c_{g_1}(\mu_0+n) - \mathcal{P}_{4,n} \\ \gamma_0\left( \mu_4 c_{g_{-1}}(\mu_0+m) - \mathcal{P}_{4,m} \right) \end{pmatrix}.
\end{align}

\noindent Expressions for $\mathcal{P}_{4,j}$ are in the supplemental Mathematica file. Using the solvability condition \eqref{25b} together with the collision condition \eqref{11} shows that $\mathcal{T}_{4,m} \equiv 0$. What remains is a linear system for $\lambda_4$ and $\gamma_1$. \\ \indent If $\alpha \neq 1.1862...$, then an application of the third-order solvability condition \eqref{28a} shows that, for $\mu_3 \in (-M_3,M_3)$,

\begin{align}
\textrm{det}\begin{pmatrix}
2 & i\mathcal{S}_{3,n} \\ 2\gamma_0 & 2(\lambda_3 + i\mu_3c_{g_{-1}}(\mu_0+m))
\end{pmatrix} = 8\lambda_{3,r},
\end{align}

\noindent where $\lambda_{3,r} = \textrm{Re}(\lambda_3)$. For $\mu_3$ in this interval, $\lambda_{3,r} \neq 0$ by construction; thus, \eqref{31} is an invertible linear system. \\
\indent We solve \eqref{31} for $\lambda_4$ by Cramer's rule, using \eqref{28a} to eliminate the dependence on $\gamma_0$. Then,

\begin{equation} \label{32}
\begin{aligned}
\lambda_4 &= i\biggr[\frac{(\lambda_3 + i\mu_3c_{g_{-1}}(\mu_0+m))(c_{g_1}(\mu_0+n) -\mathcal{P}_{4,n})}{2\lambda_{3,r}}  \phantom]
 \\
& ~~\quad \quad+ \phantom[ \frac{(\lambda_3 + i\mu_3c_{g_{1}}(\mu_0+n))(c_{g_{-1}}(\mu_0+m) -\mathcal{P}_{4,m})}{2\lambda_{3,r}}  \biggr].
\end{aligned}
\end{equation}

\noindent To simplify further, we separate the real and imaginary components of \eqref{32}. Since $\lambda_2$ \eqref{25c} is purely imaginary, $\mathcal{P}_{4,j}$ are real-valued, and $\mu_3 \in \left(-M_3,M_3\right)$, we have

\begin{align}
\lambda_{3,i} = \textrm{Im}(\lambda_3) = -i\mu_3 \left(\frac{c_{g_{-1}}(\mu_0+m) + c_{g_1}(\mu_0+n)}{2} \right),
\end{align}

\noindent according to \eqref{28c}. Equation \eqref{32} decomposes into $\lambda_4 = \lambda_{4,r} + i\lambda_{4,i}$, where

\begin{subequations}
\begin{align}
\lambda_{4,r} &= \frac{\mu_3}{4}\left[ (c_{g_{-1}}(\mu_0+m) - c_{g_1}(\mu_0+n))\left(\mu_4(c_{g_{-1}}(\mu_0+m) - c_{g_1}(\mu_0+n)) + \mathcal{P}_{4,n} - \mathcal{P}_{4,m} \right)\right], \label{32a}\\
\lambda_{4,i} &= -\frac12 \left[\mu_4(c_{g_{-1}}(\mu_0+m) + c_{g_1}(\mu_0+n)) - (\mathcal{P}_{4,m} - \mathcal{P}_{4,n}) \right].
\end{align}
\end{subequations}

\noindent As $|\mu_3| \rightarrow M_3$, $\lambda_{3,r} \rightarrow 0$. If $\lambda_{4,r}$ is to remain bounded, the numerator of \eqref{32a} must vanish in this limit. Since $c_{g_{-1}}(\mu_0+m) \neq c_{g_1}(\mu_0+n)$, we must have

\begin{align}
\mu_4 = \frac{\mathcal{P}_{4,m} - \mathcal{P}_{4,n}}{c_{g_{-1}}(\mu_0+m) - c_{g_1}(\mu_0+n)}. \label{33}
\end{align}

\noindent We refer to this equality as the \textit{regular curve condition}: it ensures that the curve asymptotic to the $p=3$ isola is continuous near its intersections with the imaginary axis.  From the \textit{regular curve condition}, we get

\begin{align} \label{34}
\lambda_4 = -i\left( \frac{\mathcal{P}_{4,m}c_{g_1}(\mu_0+n) - \mathcal{P}_{4,n}c_{g_{-1}}(\mu_0+m)               }{c_{g_{-1}}(\mu_0+m) - c_{g_1}(\mu_0+n)       } \right).
\end{align}

\noindent As expected, the Floquet parameterization and imaginary component of the $p=3$ isola have a nonzero correction at $\mathcal{O}\left(\varepsilon^4\right)$. These corrections improve the agreement between numerical and asymptotic results observed at the previous order (Figure~\ref{fig8}, Figure~\ref{fig9}). No corrections to the real component of the isola are found at fourth order. \\\\
\textbf{Remark}. If $\alpha = 1.1862...$, one can show that $\lambda_3 = 0 =\mu_3$ and $\mathcal{S}_{3,n} = 0$. Applying the Fredholm alternative to \eqref{31} gives \eqref{33}. Then, $\lambda_4$ is given by \eqref{34}, and $\gamma_0 = 1$. The constant $\gamma_1$ remains arbitrary at this order for this value of $\alpha$ only.

\begin{figure}[b!]
\centering \includegraphics[width=12cm,height=6cm]{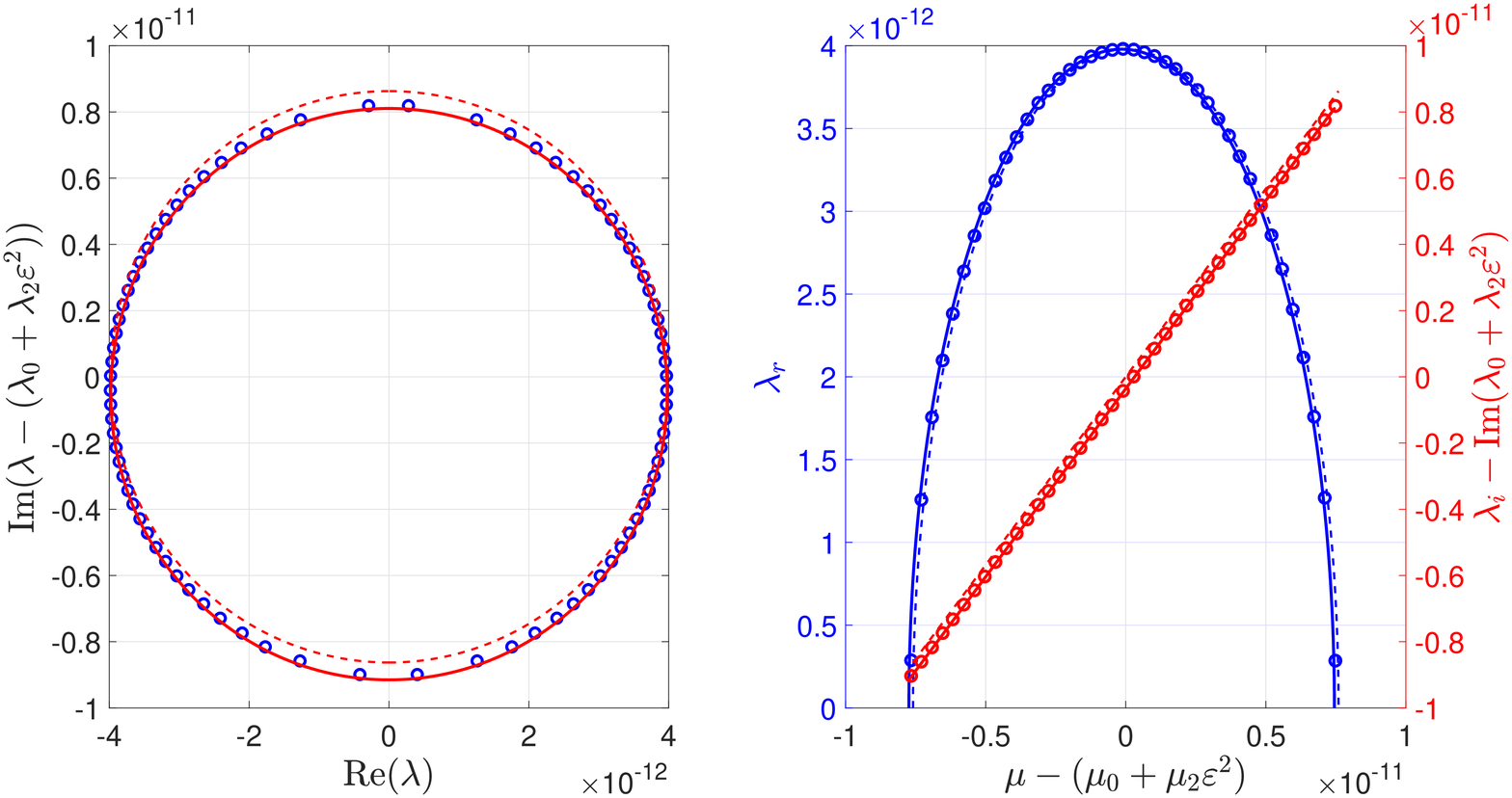}
\caption{(Left) The $p=3$ isola with $\alpha = 1$ and $\varepsilon = 5 \times 10^{-4}$ (zero- and second-order imaginary corrections removed for better visibility). Solid and dashed red curves are given by perturbation calculations to $\mathcal{O}\left(\varepsilon^4\right)$ and $\mathcal{O}\left(\varepsilon^3\right)$, respectively. Blue circles are a subset of spectral elements from the numerically computed isola using FFH. (Right) The Floquet parameterization of the real (blue) and imaginary (red) components of the isola (zero- and second-order imaginary and Floquet corrections removed for better visibility). Solid and dashed curves illustrate perturbation calculations to $\mathcal{O}\left(\varepsilon^4\right)$ and $\mathcal{O}\left(\varepsilon^3\right)$, respectively. Circles indicate FFH results. }
\label{fig8}
\end{figure}

\begin{figure}[t!]
\centering \includegraphics[width=12cm,height=6cm]{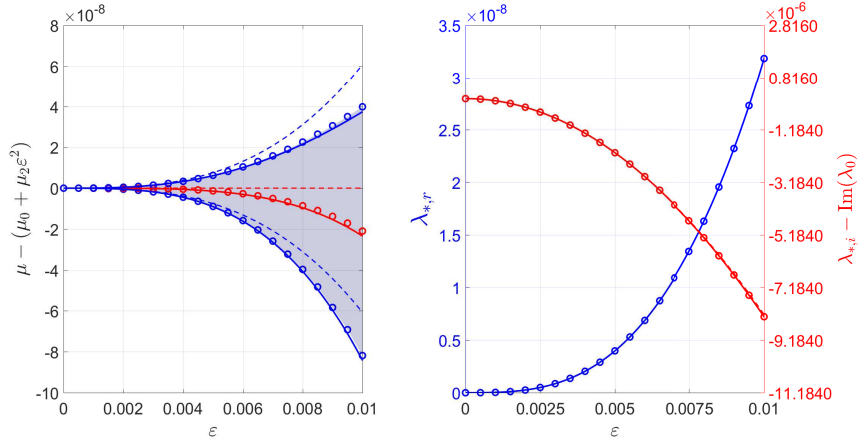}
\caption{(Left) The interval of Floquet exponents that parameterize the $p=3$ isola as a function of $\varepsilon$ with $\alpha = 1$ (zero- and second-order Floquet corrections removed for better visibility). Solid and dashed blue curves indicate the boundaries of this interval according to perturbation calculations to $\mathcal{O}\left(\varepsilon^4\right)$ and $\mathcal{O}\left(\varepsilon^3\right)$, respectively. Blue circles indicate the boundaries computed numerically by FFH. The solid red curve gives the Floquet exponent corresponding to the most unstable spectral element of the isola according to our perturbation calculations. Red circles indicate the same but computed numerically using FFH. (Right) The real (blue) and imaginary (red) components of the most unstable spectral element of the isola as a function of $\varepsilon$ (zero-order imaginary correction removed for better visibility). Solid and dashed curves illustrate perturbation calculations to $\mathcal{O}\left(\varepsilon^4\right)$ and $\mathcal{O}\left(\varepsilon^3\right)$, respectively. Circles illustrate FFH results. }
\label{fig9}
\end{figure}

\section{Conclusions}
We have extended a formal perturbation method, first introduced in \cite{akers15}, to obtain asymptotic behavior of the largest ($p=2,3$) high-frequency instabilities of small-amplitude, HPT--BW Stokes waves. In particular, we have computed explicit expressions for (i) the interval of Floquet exponents that asymptotically parameterize the $\textrm{p}^{\textrm{th}}$ isola, (ii) the leading-order behavior of its most unstable spectral elements, (iii)  the leading-order curve asymptotic to the isola, and (iv) wavenumbers that do not have a $\textrm{p}^{\textrm{th}}$ isola. Items (i)-(iii) can be extended to higher-order if necessary using the {\em regular curve condition}. In all instances, our perturbation calculations are in excellent agreement with numerical results computed by the FFH method \cite{deconinck06}. \\
\indent We restrict to $p = 2$ and $p=3$ in this work, but our method can provide asymptotic expressions for $p>3$ isolas. \textbf{We conjecture that this method yields the first real-component correction of the isola at $\mathcal{O}\left(\varepsilon^p\right)$}, similar to the cases $p = 2$ and $p = 3$. If correct, this conjecture highlights the main difficulty of computing higher-order high-frequency instabilities, both numerically and perturbatively.

The asymptotic expressions derived in this paper are intimidating and cumbersome. Although it is satisfying to have asymptotic expressions for the results previously obtained only numerically, this is not the main point of our work. Rather, (i) the perturbation method demonstrated allows one to approximate an entire isola at once, going beyond standard eigenvalue perturbation theory \cite{kato66}, (ii) the results obtained constitute a first step toward a {\em proof} of the presence of the high-frequency instabilities, and (iii) the asymptotic expressions for the range of Floquet exponents allow for a far more efficient numerical computation of the high-frequency isolas, which are difficult to track numerically as the amplitude of the solution increases.  \\\\
\textbf{Acknowledgements}: This research was funded partially by the ARCS Foundation Fellowship.
\section{Appendix A. Stokes Wave Expansions}

Below are the Stokes wave expansions of \eqref{5} (with $\mathcal{I}_j=0$) to fourth order in the small-amplitude parameter $\varepsilon$. In what follows,
\begin{align}
C_k^2 &= \frac{\tanh(\alpha k)}{\alpha k} \quad \textrm{and} \quad
D_z = \frac{1}{c_0^2 - z^2}, \quad \textrm{where} \quad c_0^2 =C_1^2.
\end{align}
\noindent For the surface displacement $\eta_S(x;\varepsilon)$,

 \begin{equation} \begin{aligned} \eta_S(x;\varepsilon) =&~ \varepsilon \eta_1(x) + \varepsilon^2\eta_2(x) + \varepsilon^3\eta_3(x) + \varepsilon^4\eta_4(x) + \mathcal{O}\left(\varepsilon^5\right) \\ =&~\varepsilon\cos(x) +\Big(N_{2,0} + 2N_{2,2}\cos(2x) \Big)\varepsilon^2 + 2N_{3,3}\varepsilon^3\cos(3x) \\ &\quad+ \Big(N_{4,0} + 2N_{4,2}\cos(2x) + 2N_{4,4}\cos(4x) \Big)\varepsilon^4 +\mathcal{O}\left(\varepsilon^5\right),  \end{aligned} \end{equation}

\noindent with
\begin{subequations}
\begin{align}
N_{2,0} =&~ \frac{3c_0^2D_1}{4}, \\
N_{2,2} =&~ \frac{3c_0^2D_{C_2}}{8}, \\
N_{3,3} =&~ \frac{c_0^2D_{C_2}D_{C_3}}{16}\Big(5c_0^2 + 4C_2^2 \Big), \\
N_{4,0} =&~ -\frac{3c_0^2D_1^3D_{C_2}^2}{64} \Big( 25c_0^8+4C_2^4+2c_0^2C_2^2(-7+2C_2^2) - 4c_0^6(2+11C_2^2) \phantom) \\
&\quad \phantom(+ c_0^4(1+22C_2^2 + 10C_2^4) \Big), \nonumber \\
N_{4,2} =&~ \frac{c_0^2D_1D_{C_2}^3D_{C_3}}{64}\Big( -20c_0^8+4C_2^4C_3^2+c_0^6(2+31C_2^2+50C_3^2)\phantom) \\
& \quad\phantom( +c_0^2C_2^2(C_3^2+2C_2^2(10+7C_3^2))  - c_0^4(38C_2^4+32C_3^2+C_2^2(-5+37C_3^2)) \Big),\nonumber \\
N_{4,4} =&~ \frac{c_0^2 D_{C_2}^2D_{C_3} D_{C_4}}{128}\Big(35c_0^6 - 20C_2^4C_3^2+5c_0^4(4C_2^2+5C_3^2) - 4c_0^2(7C_2^4+8C_2^2C_3^2)\Big).
\end{align}
\end{subequations}
\noindent For the horizontal velocity $u_S(x;\varepsilon)$ along $\eta_S(x;\varepsilon)$,

\begin{equation}\begin{aligned} u_S(x;\varepsilon) =&~ \varepsilon u_1(x) + \varepsilon^2u_2(x) + \varepsilon^3u_3(x) + \varepsilon^4u_4(x) + \mathcal{O}\left(\varepsilon^5\right) \\ =&~c_0\varepsilon\cos(x) +\Big(U_{2,0} + 2U_{2,2}\cos(2x) \Big)\varepsilon^2 + \Big(2U_{3,1}\cos(x)+2U_{3,3}\cos(3x)\Big)\varepsilon^3  \\ &\quad+ \Big(U_{4,0} + 2U_{4,2}\cos(2x) + 2U_{4,4}\cos(4x) \Big)\varepsilon^4 +\mathcal{O}\left(\varepsilon^5\right),   \end{aligned} \end{equation}

with

\begin{subequations}
\begin{align}
U_{2,0} =&~ \frac{c_0 D_1}{4}\Big(2+c_0^2\Big), \\
U_{2,2} =&~ \frac{c_0 D_{C_2}}{8}\Big(2C_2^2+c_0^2\Big), \\
U_{3,1} =&~ \frac{3c_0^3D_1D_{C_2}}{32} \Big(1-3c_0^2+2C_2^2\Big),\\
U_{3,3} =&~\frac{c_0D_{C_2}D_{C_3}}{16}\Big(c_0^4+2C_2^2C_3^2+2c_0^2(C_2^2+2C_3^2)\Big), \\
U_{4,0} =&~ \frac{-3c_0^3D_1^3D_{C_2}^2(2+c_0^2)}{64}\Big(2c_0^4(-2+5c_0^2)+C_2^2(-3+8c_0^2-17c_0^4)+2C_2^4(1+2c_0^2)\Big), \\
U_{4,2} =&~\frac{c_0D_1D_{C_2}^3D_{C_3}}{128}\Big(-25c_0^{10}+8C_2^6C_3^2 + c_0^8(7-C_2^2+45C_3^2)+c_0^6\big(C_2^2+32C_2^4 \phantom) \phantom) \\
&\quad \phantom( \phantom(+C_3^2(-27+37C_2^2)\big) -c_0^4(60C_2^6+37C_2^2C_3^2+C_2^4(-22+56C_3^2))\Big), \nonumber \\
U_{4,4} =&~\frac{c_0D_{C_2}^2D_{C_3}D_{C_4}}{128}\Big(5c_0^8-8C_2^4C_3^2C_4^2 + 2c_0^4(-2C_2^4+5C_3^2C_4^2+6C_2^2(-C_3^2+C_4^2)) \phantom) \\
&\quad \phantom( + c_0^6(8C_2^2+15(C_3^2+2C_4^2))-4c_0^2(5C_2^2C_3^2C_4^2 + 3C_2^4(C_3^2+2C_4^2)) \Big). \nonumber
\end{align}
\end{subequations}

\noindent For the velocity of the Stokes waves $c(\varepsilon)$,

\begin{align}
c(\varepsilon) = c_0 + c_2\varepsilon^2 + c_4\varepsilon^4 + \mathcal{O}\left(\varepsilon^6\right),
\end{align}

with

\begin{subequations}
\begin{align}
c_2 =&~ \frac{3c_0D_1D_{C_2}}{16}\Big(c_0^2+5c_0^4-2C_2^2(2+c_0^2) \Big), \\
c_4 =&~\frac{3c_0D_1^3D_{C_2}^3D_{C_3}}{512}\Big(3c_0^2\big(c_0^6-3c_0^8+15c_0^{10} - 85c_0^{12} + c_0^4C_2^2(11+3c_0^2-3c_0^4+205c_0^6) \phantom) \phantom) \\
&\quad \phantom(-4c_0^2C_2^4(-2+15c_0^2+3c_0^4+38c_0^6) - 4C_2^6(-4+12c_0^2-27c_0^4+c_0^6)\big) \nonumber \\
&\quad+ C_3^2\big(c_0^6(-103+309c_0^2-345c_0^4+355c_0^6) -3c_0^4C_2^2(31-57c_0^2+57c_0^4+185c_0^6)\phantom) \nonumber \\
&\quad \phantom( \phantom( + 36c_0^2C_2^4(2-3c_0^2+9c_0^4+10c_0^6) - 4C_2^6(2+c_0^2)(-2+7c_0^2+13c_0^4)\big)\Big). \nonumber
\end{align}
\end{subequations}

\section{Appendix B. Collision Condition}
Up to redefining $m$ and $n$, \eqref{11} simplifies to

\begin{align} \label{B0}
\Omega_1(\mu_0+n) = \Omega_{-1}(\mu_0+m) \neq 0.
\end{align}

\noindent With $k = \mu_0+n$ and $p=m-n$, \eqref{B0} becomes

\begin{align} \label{B8}
\Omega_1(k) = \Omega_{-1}(k+p) \neq 0.
\end{align}

\noindent We refer to \eqref{B8} as the collision condition. We prove that, for each $p \in \mathbb{Z} \setminus \{0,\pm1\}$, there exists a unique $k(p;\alpha)$ that satisfies the collision condition. These solutions $k(p;\alpha)$ are distinct from each other (for each $\alpha>0$) and result in an infinite number of distinct collision points on the imaginary axis, according to \eqref{11}. First, we establish important monotonicity properties of $\Omega_{\sigma}(k)$, defined in \eqref{10b}.
\begin{lemma}{1}  The function $\omega_\alpha(k) = \sign(k)  \sqrt{\alpha k\tanh(\alpha k)} $ is strictly increasing for $k \in \mathbb{R}$. If $|k| > 1$, then  $\omega_\alpha'(k) < \alpha |c_0|$, where $c_0^2 = \tanh(\alpha)/\alpha$.
\end{lemma}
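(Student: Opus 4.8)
The plan is to reduce everything to $k>0$ by parity and then prove two elementary hyperbolic inequalities. Since $\alpha k\tanh(\alpha k)\ge 0$ is an even function of $k$, its square root is even, so $\omega_\alpha(k)=\sign(k)\sqrt{\alpha k\tanh(\alpha k)}$ is odd; hence $\omega_\alpha'$ is even and it suffices to treat $k\ge 0$. For $k>0$ one has $\omega_\alpha(k)=\sqrt{\alpha k\tanh(\alpha k)}$, the square root of a product of two positive, strictly increasing functions of $k$, hence strictly increasing; together with $\omega_\alpha(0)=0$ and oddness this yields strict monotonicity on all of $\mathbb{R}$ (for $k_1<0<k_2$ one has $\omega_\alpha(k_1)<0<\omega_\alpha(k_2)$). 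I would also remark that $\omega_\alpha(k)=\alpha k\,(\tanh(\alpha k)/(\alpha k))^{1/2}$ with $z\mapsto(\tanh z/z)^{1/2}$ real-analytic and positive near $z=0$, so $\omega_\alpha$ is differentiable everywhere and the derivative bound is meaningful.

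For the bound, put $f(k)=\alpha k\tanh(\alpha k)=\omega_\alpha(k)^2$, so that $\omega_\alpha'(k)=f'(k)/\bigl(2\sqrt{f(k)}\bigr)$ for $k>0$, with $f'(k)=\alpha\tanh(\alpha k)+\alpha^2 k\operatorname{sech}^2(\alpha k)$. Because $\alpha|c_0|=\alpha\sqrt{\tanh\alpha/\alpha}=\sqrt{\alpha\tanh\alpha}=\omega_\alpha(1)=\sqrt{f(1)}$, the assertion $\omega_\alpha'(k)<\alpha|c_0|$ for $k>1$ is equivalent to $f'(k)<2\sqrt{f(1)f(k)}$, i.e., after dividing by $\alpha$, to
\[
\tanh(\alpha k)+\alpha k\operatorname{sech}^2(\alpha k)<2\sqrt{k\,\tanh\alpha\,\tanh(\alpha k)}\qquad(k>1).
\]
I would prove this by passing through the intermediate quantity $2\tanh(\alpha k)$. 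The inequality $\tanh(\alpha k)+\alpha k\operatorname{sech}^2(\alpha k)<2\tanh(\alpha k)$ reduces to $\alpha k\operatorname{sech}^2(\alpha k)<\tanh(\alpha k)$, i.e., to $2\alpha k<\sinh(2\alpha k)$, valid for $\alpha k>0$. The inequality $2\tanh(\alpha k)<2\sqrt{k\,\tanh\alpha\,\tanh(\alpha k)}$ squares to $\tanh(\alpha k)<k\tanh\alpha$; since $k\mapsto\tanh(\alpha k)$ is strictly concave and vanishes at $k=0$, the ratio $\tanh(\alpha k)/k$ is strictly decreasing, so $\tanh(\alpha k)/k<\tanh\alpha$ for every $k>1$. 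Both steps are strict, giving the claim for $k>1$ and, by evenness of $\omega_\alpha'$, for all $|k|>1$.

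The routine ingredients are the parity reduction and the scalar facts $\sinh x>x$ ($x>0$) and strict concavity of $\tanh$ on $[0,\infty)$. The only step requiring a choice is recognizing $2\tanh(\alpha k)$ as the correct interpolant: it absorbs the $\operatorname{sech}^2$ term through $\sinh(2x)>2x$ and meets the target through concavity of $\tanh$. As a backup presentation I would note that one can instead show $\omega_\alpha$ is strictly concave on $(0,\infty)$ — equivalently $2f f''<(f')^2$, which simplifies to $\bigl(\tanh(\alpha k)-\alpha k\operatorname{sech}^2(\alpha k)\bigr)^2+4\alpha^2 k^2\tanh^2(\alpha k)\operatorname{sech}^2(\alpha k)>0$, manifestly true — and then combine $\omega_\alpha'(k)<\omega_\alpha'(1)$ (concavity, for $k>1$) with the single check $\omega_\alpha'(1)<\omega_\alpha(1)$, which is again $2\alpha<\sinh(2\alpha)$.
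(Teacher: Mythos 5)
Your proof is correct, but it is organized differently from the paper's. The paper differentiates $\omega_\alpha$ explicitly, writing $\omega_\alpha'(k)=\tfrac12\bigl(\sqrt{\alpha\tanh(\alpha k)/k}+\alpha\sqrt{\alpha k/\sinh(\alpha k)}\,\sech^{3/2}(\alpha k)\bigr)$; positivity of both terms gives monotonicity, and for $|k|>1$ each term is bounded separately by $\alpha|c_0|$ — the first because $\tanh z/z$ decreases, the second by crushing it down to $\alpha\sech(\alpha k)$ and using $\sech(\alpha k)<\sech(\alpha)<|c_0|$ (which is again $2\alpha<\sinh(2\alpha)$) — and the average of the two bounds closes the argument. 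You instead get monotonicity without differentiating (odd extension of the square root of a product of positive increasing functions, which is cleaner and sidesteps any worry about the formula at $k=0$), and you prove the derivative bound by reducing to $f'(k)<2\sqrt{f(1)f(k)}$ for $f=\omega_\alpha^2$ and interpolating through $2\tanh(\alpha k)$. The two elementary inputs are the same in both arguments ($2x<\sinh 2x$ and the decrease of $\tanh z/z$), but your arrangement avoids the paper's three-way splitting of the second term, and your concavity remark is a genuine bonus: the identity $(f')^2-2ff''=(\tanh(\alpha k)-\alpha k\sech^2(\alpha k))^2+4\alpha^2k^2\tanh^2(\alpha k)\sech^2(\alpha k)>0$ shows $\omega_\alpha$ is strictly concave on $(0,\infty)$, which yields $\omega_\alpha'(k)<\omega_\alpha'(k_0)$ past any threshold $k_0>0$ and reduces the lemma to the single check $\omega_\alpha'(1)<\omega_\alpha(1)$; that is more information than either your main argument or the paper's proof extracts, and would be the version I would keep.
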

\begin{proof} A direct calculation shows

\begin{align} \label{B9} \omega'_\alpha(k) &=  \frac{1}{2}\left(\sqrt{\frac{\alpha \tanh(\alpha k)}{k}} + \alpha  \sqrt{\frac{\alpha k}{\sinh(\alpha k)}}  \sech^{3/2}(\alpha k) \right),
\end{align}

\noindent from which $\omega'_\alpha(k) >0$. This proves the first claim. Since $\tanh(\alpha k)/(\alpha k) \leq 1$, $\alpha k/\sinh(\alpha k) \leq 1$, and $\sech(\alpha k) \leq 1$, \eqref{B9} gives

\begin{align}
\omega_{\alpha}'(k) \leq \frac{1}{2}\left(\sqrt{\frac{\alpha \tanh(\alpha k)}{k}} + \alpha \sech(\alpha k)  \right). \label{B10b}
\end{align}

\noindent Since $\alpha > 0$, $\sinh(\alpha)/\alpha > 1 > \sech(\alpha)$, so that $\sech(\alpha) < |c_0|$. Because $\sech(z)$ is even and strictly decreasing for $z>0$, we have

\begin{align} \sech(\alpha k) < |c_0|, \quad \textrm{for} \quad |k| > 1. \label{B10c}\end{align}

\noindent Similarly, since $\tanh(z)/z$ is even and strictly decreasing for $z>0$,

\begin{align} \sqrt{\frac{\alpha \tanh(\alpha k)}{k}} < \alpha \sqrt{\frac{\tanh(\alpha)}{\alpha}} = \alpha |c_0|, \quad \textrm{for} \quad |k|>1. \label{B10d} \end{align}

\noindent Together with \eqref{B10b}, inequalities \eqref{B10c} and \eqref{B10d} imply $\omega'_\alpha(k) < \alpha |c_0|$ for $|k|>1$. \end{proof}
\begin{lemma}{2}
If $c_0>0$, $\Omega_{-1}(k)$ is strictly decreasing for $k \in \mathbb{R}$, and $\Omega_{1}(k)$ is strictly decreasing for $|k|>1$. If $c_0<0$, $\Omega_1(k)$ is strictly increasing for $k \in \mathbb{R}$, and $\Omega_{-1}(k)$ is strictly increasing for $|k|>1$.
\end{lemma}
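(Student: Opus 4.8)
The plan is to differentiate $\Omega_\sigma(k) = -\alpha c_0 k + \sigma\,\omega_\alpha(k)$ directly and read off the sign of $\Omega_\sigma'(k) = -\alpha c_0 + \sigma\,\omega_\alpha'(k)$ using the two facts supplied by Lemma 1: first, that $\omega_\alpha'(k) > 0$ for all $k\in\mathbb{R}$; second, that $\omega_\alpha'(k) < \alpha|c_0|$ whenever $|k|>1$. Everything then reduces to a short sign analysis in four cases, organized by the sign of $c_0$ and by which branch $\sigma=\pm1$ is under consideration.

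First I would treat $c_0>0$. For $\sigma=-1$ we have $\Omega_{-1}'(k) = -\alpha c_0 - \omega_\alpha'(k)$, which is strictly negative on all of $\mathbb{R}$ since both $\alpha c_0>0$ and $\omega_\alpha'(k)>0$; hence $\Omega_{-1}$ is strictly decreasing on $\mathbb{R}$. For $\sigma=+1$ we have $\Omega_1'(k) = -\alpha c_0 + \omega_\alpha'(k)$; here positivity of $\omega_\alpha'$ alone is not enough, so I invoke the second part of Lemma 1: for $|k|>1$, $\omega_\alpha'(k) < \alpha|c_0| = \alpha c_0$, giving $\Omega_1'(k)<0$ and thus $\Omega_1$ strictly decreasing for $|k|>1$. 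Next I would handle $c_0<0$, which is the mirror image: $\Omega_1'(k) = -\alpha c_0 + \omega_\alpha'(k)$ is strictly positive on $\mathbb{R}$ since $-\alpha c_0>0$, so $\Omega_1$ is strictly increasing on $\mathbb{R}$; and $\Omega_{-1}'(k) = -\alpha c_0 - \omega_\alpha'(k)$ is positive when $\omega_\alpha'(k) < -\alpha c_0 = \alpha|c_0|$, which holds for $|k|>1$ by Lemma 1, so $\Omega_{-1}$ is strictly increasing for $|k|>1$. That exhausts the four claims.

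There is essentially no obstacle here: the entire content has been front-loaded into Lemma 1, and Lemma 2 is just the observation that adding the linear term $-\alpha c_0 k$ either reinforces or competes with the monotone term $\sigma\omega_\alpha(k)$, and that the competition is won on $|k|>1$ by the bound $\omega_\alpha'(k)<\alpha|c_0|$. The only point requiring a little care is bookkeeping of signs — making sure that $|c_0|$ equals $c_0$ in the $c_0>0$ subcase and $-c_0$ in the $c_0<0$ subcase so that the Lemma 1 bound is applied with the correct sign — and noting that $c_0\neq0$ always (since $c_0^2=\tanh(\alpha)/\alpha>0$), so the two sign cases are exhaustive. I would present the proof as a single short paragraph per sign of $c_0$, citing Lemma 1 at the two places indicated.
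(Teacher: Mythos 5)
Your proof is correct and follows exactly the paper's own argument: differentiate $\Omega_\sigma$ and apply the two monotonicity facts from Lemma 1, with $\omega_\alpha'(k)>0$ handling the branch where the linear term reinforces, and $\omega_\alpha'(k)<\alpha|c_0|$ for $|k|>1$ handling the competing branch. The only difference is that you write out the $c_0<0$ case explicitly where the paper dismisses it as analogous.
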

\begin{proof} Suppose $c_0>0$. By definition, $\Omega'_\sigma(k) = -\alpha c_0 + \sigma \omega'_\alpha(k)$. If $\sigma = -1$, we use $\omega'_\alpha(k) > 0$ from Lemma 1 to conclude $\Omega'_{-1}(k) <  0$. If $\sigma = 1$ and $|k|>1$, we use $\omega'_\alpha (k) < \alpha |c_0|$ from Lemma 1 to conclude $\Omega'_{1}(k) = -\alpha c_0 + \omega'_\alpha(k) < 0$, since $c_0 > 0$. An analogous proof holds when $c_0<0$. \end{proof}
~\indent In what follows, we consider $c_0>0$, which corresponds to right-traveling Stokes waves. Similar statements hold when $c_0<0$ if one rewrites the collision condition \eqref{B8} as $\Omega_{-1}(k) = \Omega_{1}(k+p) \neq 0$, where $k$ and $p$ are redefined appropriately.
\begin{lemma}{3}
For each $p \in \mathbb{R}$ and $\alpha > 0$, there exists a unique $k(p;\alpha) \in \mathbb{R}$ such that $\Omega_1(k(p;\alpha)) = \Omega_{-1}(k(p;\alpha)+p)$. If $p \in \mathbb{Z}$ and $c_0 > 0$, we have $\cdots < k(1;\alpha) < k(0;\alpha) < k(-1;\alpha) < \cdots$. Moreover, $|k(p;\alpha)| > |p|$ for $p \in \mathbb{Z} \setminus \{0,\pm 1 \}$ and $c_0>0$.
\end{lemma}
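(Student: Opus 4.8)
The plan is to reduce all three assertions to elementary properties of the single auxiliary function $F_p(k) := \Omega_1(k) - \Omega_{-1}(k+p)$, regarded as a function of $k$ with $p$ a parameter. First I would substitute $\Omega_\sigma(k) = -\alpha c_0 k + \sigma\,\omega_\alpha(k)$ and note that the terms linear in $k$ cancel, leaving the identity
\[
F_p(k) = \alpha c_0\, p + \omega_\alpha(k) + \omega_\alpha(k+p).
\]
By Lemma 1, $\omega_\alpha$ is strictly increasing on $\mathbb{R}$, so $F_p$ is strictly increasing as a sum of a constant and two strictly increasing functions of $k$; moreover $\omega_\alpha(k) \to \pm\infty$ as $k \to \pm\infty$, hence $F_p(k) \to \pm\infty$. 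Therefore $F_p$ has a unique zero, which is the desired $k(p;\alpha)$. This disposes of existence and uniqueness for every $p \in \mathbb{R}$ and $\alpha > 0$, using neither the sign of $c_0$ nor Lemma 2.

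For the ordering I would exploit strict monotonicity of $F_p$ in each argument. Given $p_1 < p_2$, subtract the defining equation at $p_1$ and evaluate at $k = k(p_1;\alpha)$:
\[
F_{p_2}\bigl(k(p_1;\alpha)\bigr) = \alpha c_0\,(p_2 - p_1) + \bigl(\omega_\alpha(k(p_1;\alpha)+p_2) - \omega_\alpha(k(p_1;\alpha)+p_1)\bigr) > 0,
\]
using $c_0 > 0$, $p_2 > p_1$, and the monotonicity of $\omega_\alpha$. Since $F_{p_2}$ is strictly increasing and vanishes at $k(p_2;\alpha)$, this forces $k(p_1;\alpha) > k(p_2;\alpha)$. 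Taking $p_1 = p$ and $p_2 = p+1$ over $p \in \mathbb{Z}$ yields the chain $\cdots < k(1;\alpha) < k(0;\alpha) < k(-1;\alpha) < \cdots$; one also records $k(0;\alpha) = 0$, since $F_0(k) = 2\omega_\alpha(k)$ vanishes only at $k = 0$.

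For the bound $|k(p;\alpha)| > |p|$ with $p \in \mathbb{Z}\setminus\{0,\pm1\}$ and $c_0 > 0$, I would test $F_p$ at the proposed endpoint. For $p \ge 2$, oddness of $\omega_\alpha$ and $\omega_\alpha(0) = 0$ give $F_p(-p) = \alpha c_0\,p - \omega_\alpha(p)$. The crux is the inequality $\omega_\alpha(p) < \alpha c_0\,p$, which follows from Lemma 1: since $\omega_\alpha(1) = \sqrt{\alpha\tanh\alpha} = \alpha c_0$ and $\omega_\alpha'(s) < \alpha c_0$ for $s > 1$, integrating from $1$ to $p$ gives $\omega_\alpha(p) < \alpha c_0 + \alpha c_0(p-1) = \alpha c_0\,p$. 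Hence $F_p(-p) > 0 = F_p(k(p;\alpha))$, and strict monotonicity yields $k(p;\alpha) < -p < 0$, i.e. $|k(p;\alpha)| > p = |p|$. For $p \le -2$, writing $p = -q$ with $q \ge 2$, the mirror computation gives $F_{-q}(q) = \omega_\alpha(q) - \alpha c_0\,q < 0$ by the same estimate, so $k(-q;\alpha) > q > 0$ and again $|k(p;\alpha)| > |p|$.

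The routine parts are the cancellation producing the identity for $F_p$ and the monotonicity/limit bookkeeping. The only real content — and the closest thing to an obstacle — is recognizing that all three claims collapse to the single scalar inequality $\omega_\alpha(p) < \alpha c_0\,|p|$ for integers $|p| \ge 2$, equivalently the strict decrease of $z \mapsto \tanh(z)/z$, with the value $\omega_\alpha(1) = \alpha c_0$ marking the borderline (one checks $k(\pm1;\alpha) = \mp1$ and $k(0;\alpha) = 0$), which is exactly why the exceptional set is $\{0,\pm1\}$. I anticipate no genuine difficulty; care is needed only to keep the inequalities strict so that $p = \pm1$ is correctly excluded, and to respect the sign conventions attached to the choice $c_0 > 0$.
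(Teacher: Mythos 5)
Your proof is correct, and it follows the paper's skeleton: the same auxiliary function $F$, uniqueness from $\partial_k F = \omega_\alpha'(k)+\omega_\alpha'(k+p)>0$ via Lemma 1, and sign changes at infinity for existence. Two of your sub-arguments, however, genuinely diverge from the paper's. For the ordering in $p$, the paper differentiates the root implicitly, obtaining $k'(p)=\Omega_{-1}'(k+p)/\bigl(\omega_\alpha'(k)+\omega_\alpha'(k+p)\bigr)<0$, which requires Lemma 2; your discrete comparison $F_{p_2}(k(p_1;\alpha))=\alpha c_0(p_2-p_1)+\omega_\alpha(k(p_1;\alpha)+p_2)-\omega_\alpha(k(p_1;\alpha)+p_1)>0$ bypasses Lemma 2 entirely and needs only Lemma 1 plus $c_0>0$, at the cost of giving monotonicity only along the comparisons you make rather than a derivative formula (which is all the lemma asserts anyway). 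For the bound $|k(p;\alpha)|>|p|$, the paper argues by contradiction, reducing to $\omega_\alpha(p)/p\geq \omega_\alpha(1)/1$ and contradicting the strict decrease of $\omega_\alpha(z)/z$ (equivalently of $\tanh(z)/z$); you instead evaluate $F_p(-p)=\alpha c_0 p-\omega_\alpha(p)$ directly and prove $\omega_\alpha(p)<\alpha c_0 p$ by integrating the derivative bound $\omega_\alpha'(s)<\alpha c_0$ for $s>1$ from Lemma 1. The two routes establish the same scalar inequality; yours reuses the second half of Lemma 1 (which the paper otherwise deploys only through Lemma 2), while the paper's isolates the monotonicity of $\omega_\alpha(z)/z$, a fact it also needs nowhere else. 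Both are complete; your version has the minor advantage of making the whole lemma depend on Lemma 1 alone.
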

\begin{proof} Fix $p \in \mathbb{R}$ and $\alpha > 0$. Define $F(k,p) = \Omega_1(k)-\Omega_{-1}(k+p)$. Then,

\begin{align}
F(k,p) \sim 2k \sqrt{\frac{\alpha}{|k|}} + \mathcal{O}\left(\frac{1}{\sqrt{|k|}}\right) \quad \textrm{as} \quad |k| \rightarrow \infty.
\end{align}

\noindent Since $F$ has opposite signs as $k \rightarrow \pm \infty$, there exists at least one root, denoted $k(p;\alpha)$. Since $\partial_k F(k,p) = \omega'(k)+\omega'(k+p) > 0$ by Lemma 1, $k(p;\alpha)$ is the only root of $F$ in $\mathbb{R}$, proving the first claim of the theorem. \\
\indent To prove the second claim, differentiate $F(k(p;\alpha),p)$ with respect to $p$. Using the definition of $\Omega_{\sigma}$,

\begin{align}
k'(p) &= \frac{\Omega_{-1}'(k(p;\alpha)+p)}{\omega'(k(p;\alpha)) + \omega'(k(p;\alpha)+p)},
\end{align}

\noindent which is well-defined since $\partial_k F(k,p) > 0$. If $c_0>0$, then Lemma 2 implies $k'(p) < 0$. If $p$ is restricted to $\mathbb{Z}$, we have $\cdots < k(1;\alpha) < k(0;\alpha) < k(-1;\alpha) < \cdots$, as desired. \\
\indent To prove the third claim, first consider $p>1$. Suppose $k(p;\alpha) \geq -p$. Since $\omega_\alpha(k)$ is odd and strictly increasing by Lemma 1,

\begin{subequations} \begin{align}\omega_\alpha(k(p;\alpha)) \geq& -\omega_\alpha(p),  \label{B11bb} \\ \omega_\alpha(k(p;\alpha)+p) \geq& \omega_\alpha(0) = 0.  \label{B11c} \end{align}   \end{subequations}

\noindent Using the definition of $\Omega_{\sigma}$, $F(k(p;\alpha),p) = 0$ can be rewritten as

\begin{align}
\omega_\alpha(k(p;\alpha)) + \omega_\alpha(k(p;\alpha)+p) = -\alpha c_0 p. \label{B11d}
\end{align}

\noindent Together with \eqref{B11d}, inequalities \eqref{B11bb} and \eqref{B11c} imply

\begin{align}
-\omega_\alpha(p) \leq -\alpha c_0 p \quad \Rightarrow \quad \frac{\omega_\alpha(p)}{p} \geq \alpha c_0  = \frac{\omega_\alpha(1)}{1},
\end{align}

\noindent a contradiction since $\omega_\alpha(z)/z$ is strictly decreasing for $z>0$. Therefore, $k(p;\alpha) < -p$ for $p>1$. Since $\Omega_{\sigma}(k)$ is odd, $k(p;\alpha) = -k(-p;\alpha)$. Therefore, when $p < -1$, $k(p;\alpha) > -p$. Combining the two cases yields $|k(p;\alpha)| > |p|$ whenever $p \in \mathbb{Z} \setminus \{0,\pm1\}$ and $c_0>0$, as desired. \end{proof}
Lemma 3 has several consequences:
\begin{enumerate}
\item[1.] When $c_0>0$, $k(p;\alpha) < 0$ for $p>0$, and $k(p;\alpha) > 0$ for $p<0$.
\item[2.] When $c_0>0$, $k(p;\alpha) \rightarrow \pm \infty$ as $p \rightarrow \mp \infty$. In fact, the sequence $\{k(p;\alpha)\}$ must grow at least linearly as $|p| \rightarrow \infty$. Formal arguments suggest quadratic growth in this limit.
\item[3.] The products $k(p;\alpha)(k(p;\alpha)+p) >0$ and $\omega_{\alpha}(k(p;\alpha))\omega_{\alpha}(k(p;\alpha)+p)>0$ when $c_0>0$. The latter of these products is related to the Krein signature condition proposed in \cite{mackay86}. In effect, Lemma 3 provides a different proof that collided eigenvalues \eqref{11} have opposite Krein signatures, consistent with \cite{deconinck17}.
\end{enumerate}
The above results lead to the following theorem.

\begin{theorem}{4} Let $c_0 > 0$. If $p \in \{0, \pm 1 \}$, then the collision condition \eqref{B8} is not satisfied. If $p \in \mathbb{Z} \setminus \{0,\pm 1 \}$, then $k(p;\alpha)$ solves the collision condition. Moreover, $\cdots < \lambda_{i,3} < \lambda_{i,2} < 0 < \lambda_{i,-2} < \lambda_{i,-3} < \cdots$, where $\lambda_{i,p}$ is the imaginary part of the collision point corresponding to $k(p;\alpha)$.
\end{theorem}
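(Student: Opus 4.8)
The plan is to reduce everything to the monotonicity facts of Lemmas 1--3 together with a precise description of where the branches $\Omega_{\pm1}$ vanish. First I would compute the zero sets. Solving $\Omega_1(k)=0$ amounts to $\omega_\alpha(k)=\alpha c_0 k$; since $\omega_\alpha(k)$ and $\alpha c_0 k$ have the same sign (both $\mathrm{sgn}(k)$, as $\alpha c_0>0$), this equation is equivalent to its square $\alpha k\tanh(\alpha k)=\alpha^2c_0^2k^2$, i.e., using $c_0^2=\tanh(\alpha)/\alpha$, to $\tanh(\alpha k)/(\alpha k)=\tanh(\alpha)/\alpha$. Because $z\mapsto\tanh(z)/z$ is even and strictly decreasing on $(0,\infty)$, this forces $|k|\in\{0,1\}$, so $\Omega_1(k)=0\iff k\in\{0,\pm1\}$. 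A sign check on $\Omega_{-1}(k)=-\alpha c_0 k-\omega_\alpha(k)$ shows it is negative for $k>0$ and positive for $k<0$, hence $\Omega_{-1}(k)=0\iff k=0$. This step is routine but is the crux of why $p\in\{0,\pm1\}$ must be excluded.

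For the first claim, fix $p\in\{0,\pm1\}$ and let $k(p;\alpha)$ be the unique root of $F(\cdot,p)=\Omega_1(\cdot)-\Omega_{-1}(\cdot+p)$ guaranteed by Lemma 3. When $p=0$, $F(0,0)=\Omega_1(0)-\Omega_{-1}(0)=0$, so $k(0;\alpha)=0$. When $p=1$, I would check directly that $k=-1$ is a root: $\Omega_1(-1)=0=\Omega_{-1}(0)$ by the zero-set computation, so by uniqueness $k(1;\alpha)=-1$; the case $p=-1$ follows from the reflection identity $k(-p;\alpha)=-k(p;\alpha)$, itself a consequence of the oddness of $\Omega_\sigma$. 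In all three cases $\Omega_1(k(p;\alpha))=0$, so the candidate collision point $-i\Omega_1(k(p;\alpha))$ is the origin, and the strict inequality $\neq 0$ in \eqref{B8} fails. For the second claim, let $p\in\mathbb{Z}\setminus\{0,\pm1\}$; Lemma 3 gives $|k(p;\alpha)|>|p|\geq 2$, so $k(p;\alpha)\notin\{0,\pm1\}$, whence $\Omega_1(k(p;\alpha))\neq0$, and since $\Omega_1(k(p;\alpha))=\Omega_{-1}(k(p;\alpha)+p)$ by the definition of $k(p;\alpha)$, the collision condition \eqref{B8} holds.

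For the ordering, write $\lambda_{i,p}=-\Omega_1(k(p;\alpha))$ for the imaginary part of the collision point. For $p\geq 2$ we have $k(p;\alpha)<-p\leq-2<-1$, so all these arguments lie in the region where, by Lemma 2 (with $c_0>0$), $\Omega_1$ is strictly decreasing; since $\Omega_1(-1)=0$ this also gives $\Omega_1(k(p;\alpha))>0$, i.e., $\lambda_{i,p}<0$. By Lemma 3, $p\mapsto k(p;\alpha)$ is strictly decreasing, and composing with $\Omega_1$, which is decreasing on $(-\infty,-1)$, shows $p\mapsto\Omega_1(k(p;\alpha))$ is strictly increasing, hence $p\mapsto\lambda_{i,p}$ is strictly decreasing on $\{p\geq 2\}$: this is $\cdots<\lambda_{i,3}<\lambda_{i,2}<0$. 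Finally, oddness of $\Omega_1$ together with $k(-p;\alpha)=-k(p;\alpha)$ gives $\lambda_{i,-p}=-\lambda_{i,p}$, so the negative-index inequalities follow by reflection: $0<\lambda_{i,-2}<\lambda_{i,-3}<\cdots$. Concatenating the two chains yields the claimed ordering.

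The main obstacle is not computational; it is the bookkeeping around the degenerate cases $p\in\{0,\pm1\}$, where Lemma 3 still produces a root $k(p;\alpha)$ but that root coincides with a zero of $\Omega_1$ (and, for $p=0$, of both branches), so the collision collapses to the origin and must be discarded. Making this precise requires the exact zero-set computation above and the exact identification $k(1;\alpha)=-1$; once these are in place, every other assertion of the theorem is an immediate consequence of the monotonicity already established.
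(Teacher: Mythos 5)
Your proof is correct and follows essentially the same route as the paper: identify $k(0;\alpha)=0$, $k(\pm1;\alpha)=\mp1$ to dispose of the degenerate cases, invoke Lemma 3's bound $|k(p;\alpha)|>|p|$ together with Lemma 2's monotonicity of $\Omega_1$ on $|k|>1$ to get the nonvanishing and the ordering of $\lambda_{i,p}=-\Omega_1(k(p;\alpha))$. The only difference is that you make explicit the zero-set computation $\Omega_1^{-1}(0)=\{0,\pm1\}$ and the reflection $\lambda_{i,-p}=-\lambda_{i,p}$, which the paper compresses into ``by inspection'' and ``immediate.''
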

\begin{proof} When $p = 0$ or $\pm 1$, we have $k(p;\alpha) = 0$ or $\mp 1$, respectively, by inspection. It follows that $\Omega_1(k(p;\alpha)) = 0$ in all three cases, and so \eqref{B8} is not satisfied. This proves the first claim. \\ \indent
To prove the second claim, consider the sequence $\{\Omega_1(k(p;\alpha))\}$, $p \in \mathbb{Z} \setminus \{0,\pm 1 \}$. From Lemma 3, $\{k(p;\alpha)\}$ is a strictly decreasing sequence, and each element of this sequence satisfies $|k(p;\alpha)| > |p| > 1$. Thus, Lemma 2 holds, and the sequence $\{\Omega_1(k(p;\alpha))\}$ is strictly increasing. Since $\Omega_1(\pm 1 ) = 0$, we have $\Omega_1(k(p;\alpha)) \neq 0$. This proves that $k(p;\alpha)$ satisfies the collision condition \eqref{B8} for the relevant values of $p$. \\ \indent
The proof of the third claim is immediate since $\{\Omega_1(k(p;\alpha))\}$ is strictly increasing. \end{proof}
Let $\mu_0 = k(p;\alpha) - [k(p;\alpha)]$ for $p \in \mathbb{Z} \setminus \{0,\pm 1\}$, where $[\cdot]$ denotes the nearest integer function. Then, $\mu_0$ is the unique Floquet exponent in $[-1/2,1/2]$ for which $\lambda^{(1)}_{0,\mu_0,n}$ and  $\lambda^{(-1)}_{0,\mu_0,m}$ satisfy \eqref{11} with $n = [k(p;\alpha)]$ and $m=n+p$.

%%%%%%%%%%%%%%%%%%%%%%%%%%%%%%%%%%%%%%%%%%

% Please provide either the correct journal abbreviation (e.g. according to the “List of Title Word Abbreviations” http://www.issn.org/services/online-services/access-to-the-ltwa/) or the full name of the journal.
% Citations and References in Supplementary files are permitted provided that they also appear in the reference list here.

%=====================================
% References, variant A: external bibliography
%=====================================
%\externalbibliography{yes}
%\bibliography{your_external_BibTeX_file}

%=====================================
% References, variant B: internal bibliography
%=====================================

\end{document}